\title{Kissing numbers of regular graphs}
\author{Maxime Fortier Bourque and Bram Petri}
\date{\today}
\pgfplotsset{compat=1.7}
\numberwithin{equation}{section}
\newtheorem{thm}{Theorem}[section]
\newtheorem{cor}[thm]{Corollary}
\newtheorem{lem}[thm]{Lemma}
\theoremstyle{definition}
\newtheorem{rem}[thm]{Remark}
\newtheorem{que}{Question}
\newcommand{\thmref}[1]{Theorem~\ref{#1}}
\newcommand{\secref}[1]{Section~\ref{#1}}
\newcommand{\lemref}[1]{Lemma~\ref{#1}}
\newcommand{\corref}[1]{Corollary~\ref{#1}}
\newcommand{\figref}[1]{Figure~\ref{#1}}
\newcommand{\eqnref}[1]{Equation~\eqref{#1}}
\newcommand{\nc}{\newcommand}
\nc{\dmo}{\DeclareMathOperator}
\nc{\abs}[1]{\left| #1 \right|}
\nc{\bigO}[1]{O\left(#1\right)}
\nc{\card}[1]{\left|#1\right|}
\nc{\ceil}[1]{\left\lceil #1 \right\rceil}
\nc{\CC}{\mathbb{C}}
\nc{\dilog}{\mathcal{L}}
\nc{\floor}[1]{\left\lfloor #1 \right\rfloor}
\nc{\ind}{\mathds{1}}
\nc{\ZZ}{\mathbb{Z}}
\nc{\len}[1]{\left| #1 \right|}
\nc{\littleo}[1]{o\left(#1\right)}
\dmo{\Mat}{Mat}
\nc{\NN}{\mathbb{N}}
\nc{\norm}[1]{\left|\left| #1 \right|\right|}
\nc{\QQ}{\mathbb{Q}}
\nc{\RR}{\mathbb{R}}
\nc{\st}[2]{\left\{\, #1 \,:\, #2\,\right\}}
\dmo{\supp}{supp}
\nc{\tr}[1]{\mathrm{tr}\left(#1\right)}
\nc{\what}{\widehat}
\dmo{\im}{Im}
\nc{\eps}{\varepsilon}
\dmo{\li}{li}
\dmo{\arccosh}{arccosh}
\dmo{\area}{area}
\dmo{\conv}{conv}
\dmo{\diam}{diam}
\dmo{\DD}{\mathbb{D}}
\dmo{\dist}{\mathrm{d}}
\nc{\HH}{\mathbb{H}}
\dmo{\Isom}{Isom}
\dmo{\MCG}{MCG}
\dmo{\MPL}{MPL}
\dmo{\Mod}{\mathcal{M}}
\dmo{\PL}{PL}
\nc{\Sphere}{\mathbb{S}}
\dmo{\sys}{sys}
\dmo{\kiss}{Kiss}
\dmo{\Teich}{\mathcal{T}}
\nc{\Torus}{\mathbb{T}}
\dmo{\vol}{vol}
\dmo{\WP}{WP}
\dmo{\convTV}{\;\stackrel{\mathrm{TV}}{\longrightarrow}\;}
\nc{\ExV}[2]{\mathbb{E}_{#1}\left[#2\right]}
\dmo{\EE}{\mathbb{E}}
\nc{\Pro}[2]{\mathbb{P}_{#1}\left[#2\right]}
\dmo{\PP}{\mathbb{P}}
\nc{\distTV}[2]{\mathrm{d}_{\rm TV}\left(#1,#2\right)}
\dmo{\UU}{\mathbb{U}}
\nc{\Var}[2]{\mathbb{V}\mathrm{ar}_{#1}\left[#2\right]}
\dmo{\alt}{\mathfrak{A}}
\dmo{\Aut}{Aut}
\dmo{\Fix}{Fix}
\dmo{\GL}{GL}
\dmo{\Hom}{Hom}
\dmo{\id}{Id}
\dmo{\PGL}{PGL}
\dmo{\PSL}{PSL}
\dmo{\PO}{PO}
\dmo{\Rep}{Rep}
\dmo{\SL}{SL}
\dmo{\SO}{SO}
\dmo{\sym}{\mathfrak{S}}
\dmo{\inv}{\mathcal{I}}
\dmo{\orb}{\mathcal{O}}
\dmo{\stab}{Stab}
\dmo{\calA}{\mathcal{A}}
\dmo{\calB}{\mathcal{B}}
\dmo{\calC}{\mathcal{C}}
\dmo{\calD}{\mathcal{D}}
\dmo{\calE}{\mathcal{E}}
\dmo{\calF}{\mathcal{F}}
\dmo{\calG}{\mathcal{G}}
\dmo{\calH}{\mathcal{H}}
\dmo{\calI}{\mathcal{I}}
\dmo{\calJ}{\mathcal{J}}
\dmo{\calK}{\mathcal{K}}
\dmo{\calL}{\mathcal{L}}
\dmo{\calM}{\mathcal{M}}
\dmo{\calN}{\mathcal{N}}
\dmo{\calO}{\mathcal{O}}
\dmo{\calP}{\mathcal{P}}
\dmo{\calQ}{\mathcal{Q}}
\dmo{\calR}{\mathcal{R}}
\dmo{\calS}{\mathcal{S}}
\dmo{\calT}{\mathcal{T}}
\dmo{\calU}{\mathcal{U}}
\dmo{\calV}{\mathcal{V}}
\dmo{\calW}{\mathcal{W}}
\dmo{\calX}{\mathcal{X}}
\dmo{\calY}{\mathcal{Y}}
\dmo{\calZ}{\mathcal{Z}}
\nc{\klav}{Klav\v{z}ar}
\nc{\bi}{\mathbf{i}}
\nc{\bj}{\mathbf{j}}
\nc{\bk}{\mathbf{k}}
\begin{document}

\begin{abstract}
We prove a sharp upper bound on the number of shortest cycles contained inside any connected graph in terms of its number of vertices, girth, and maximal degree. Equa\-lity holds only for Moore graphs, which gives a new characterization of these graphs. In the case of regular graphs, our result improves an inequality of Teo and Koh. We also show that a subsequence of the Ramanujan graphs of Lubotzky--Phillips--Sarnak have super-linear kissing numbers.
\end{abstract}

\maketitle

\section{Introduction}

We define the \emph{kissing number} $\kiss(G)$ of a graph $G$ to be the number of distinct shortest oriented cycles in $G$, whose common length is called the \emph{girth} of $G$. The kissing number can be similarly defined for any length metric space. Besides for graphs \cite{TeoKoh,Azakla}, this invariant has been studied for flat tori \cite{ConwaySloane,Vladut} and hyperbolic manifolds \cite{SchmutzKiss,SchmutzSuper,Par,FanPar,FBP}. The name comes from the fact that to any flat torus corresponds a sphere packing of its universal cover, and the kissing number represents the number of spheres tangent to any given sphere in this packing. For inhomogeneous spaces like finite graphs or hyperbolic manifolds, this picture is no longer accurate, but the name is kept by analogy.

The \emph{Moore bound} says that a $d$-regular graph of girth $g$ has at least
\[ 
1 + d \sum_{j=0}^{(g-3)/2} (d-1)^j \quad \quad \text{or} \quad  \quad 2 \sum_{j=0}^{(g-2)/2} (d-1)^j
\]
vertices depending on whether $g$ is odd or even, respectively. A graph achieving the Moore bound is called a \emph{Moore graph}. Apart from cycles, all Moore graphs have girth in the set $\{1,2,3,4,5,6,8,12\}$ \cite[Chapter 23]{BiggsBook}. In particular, there are only finitely many Moore graphs in each degree $d\geq 3$. All Moore graphs have been classified except for a hypothetical Moore graph of degree $57$ and girth $5$.

In this paper, we prove that graphs with large kissing numbers have large girth. The bound we prove is sharp, and becomes an equality precisely for Moore graphs.

\begin{thm}\label{thm:main}If $G$ is a connected graph of maximal degree $d$ and girth $g$ on $n$ vertices, then
\begin{equation} \label{eq:main}
\kiss(G) \leq \frac{n d (d-1)^{\lfloor g/2 \rfloor}}{g}
\end{equation}
with equality if and only if $G$ is a Moore graph.
\end{thm}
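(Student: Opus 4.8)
The plan is to prove \eqref{eq:main} by a double counting argument that reduces it to a local estimate, and then to read off the equality case from where that estimate is tight. Write $m=\floor{g/2}$. The local estimate to aim for is that \emph{through every vertex $v$ of $G$ there pass at most $d(d-1)^{m}$ oriented $g$-cycles}. Granting this, consider the pairs $(C,v)$ with $C$ an oriented $g$-cycle of $G$ and $v$ a vertex of $C$. Counting by cycles, each of the $\kiss(G)$ cycles contributes its $g$ vertices, so there are exactly $g\cdot\kiss(G)$ such pairs; counting by vertices, there are at most $n\cdot d(d-1)^{m}$ of them. Dividing by $g$ yields \eqref{eq:main}.

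For the local estimate, fix $v$ and an oriented $g$-cycle $C=(v_0,v_1,\dots,v_{g-1})$ with $v_0=v$ (indices read modulo $g$), and cut it at the vertex $v_m$. First I would show that the arc $v_0,v_1,\dots,v_m$ is a geodesic of $G$: otherwise a strictly shorter path from $v_0$ to $v_m$ would combine with it into a closed walk of length $<g$ which, once reduced, is nontrivial and hence contains a cycle shorter than the girth. Since geodesics are non-backtracking and $G$ has maximal degree $d$, there are at most $\deg(v)(d-1)^{m-1}\le d(d-1)^{m-1}$ choices for this arc. Once it is fixed, $v_{m+1}$ is one of the at most $d-1$ neighbours of $v_m$ distinct from $v_{m-1}$; and a similar use of the girth shows that the distance from $v_{m+1}$ to $v_0$ equals the length of the remaining arc $v_{m+1},\dots,v_{g-1},v_0$ (namely $m$ if $g=2m+1$ and $m-1$ if $g=2m$), and that a geodesic of that length between two fixed vertices is unique, two distinct ones bounding a cycle of length $<g$. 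Hence $C$ is recovered from its first arc together with $v_{m+1}$, and there are at most $d(d-1)^{m-1}\cdot(d-1)=d(d-1)^{m}$ such data, proving the estimate.

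Finally, equality in \eqref{eq:main} amounts to equality, for \emph{every} vertex $v$, in all of the inequalities above. Unwinding this, I expect it to say precisely: (i) every vertex of $G$ has degree exactly $d$, so that each degree-dependent count is maximal; and (ii) every non-backtracking path of length $m$ in $G$, extended by one further non-backtracking edge, actually closes up into an honest shortest cycle. Condition (ii) forces the partition of $V(G)$ by distance from any vertex $v$ to be as spread out as possible: there is no vertex at distance $m+1$ from $v$, so $\diam(G)\le m$, and when $g$ is even the vertices at distance $m$ from $v$ form an independent set each of whose members has all of its neighbours at distance $m-1$. A short count of the vertices by their distance from $v$, using $d$-regularity and the girth, then shows that $n$ equals the Moore bound, so $G$ is a Moore graph; conversely I would check directly --- using that a Moore graph of girth $g$ is $d$-regular of diameter $m$ and is bipartite when $g$ is even --- that (i) and (ii) hold there, so that equality is attained. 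I expect this last part to be the main obstacle: one must show that the tightness conditions are sufficient and not only necessary, with the even-girth case more delicate than the odd one, since a diameter bound alone does not suffice there (the Wagner graph is $3$-regular of girth $4$ and diameter $2$ but is not a Moore graph). One should also dispose of the degenerate cases $d\le 2$, where a connected graph with a cycle is a single cycle $C_n$, itself a Moore graph, for which both sides of \eqref{eq:main} equal $2$.
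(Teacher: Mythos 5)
Your proposal is correct and follows essentially the same route as the paper: counting pairs $(C,v)$ is the paper's count of pointed shortest closed geodesics, the observation that a shortest cycle is determined by an initial non-backtracking arc of length $\lfloor g/2\rfloor+1$ is exactly the paper's fellow-traveling lemma, and the equality analysis (regularity plus the extension property forcing the ball of radius $\lfloor g/2\rfloor$ about a vertex to be a spanning tree, up to the extra level in the even-girth case) is the paper's argument phrased via distance levels rather than the open ball of radius $g/2$ centered at a vertex or edge midpoint. The only parts left as sketches --- the vertex count by distance classes and the converse that Moore graphs attain equality --- are filled in by the paper along the lines you indicate.
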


Equality can be interpreted to hold if $G$ is a point or an edge (in which case $g=\infty$). These degenerate cases can be included in the definition of Moore graphs for accuracy.

In \cite{TeoKoh}, Teo and Koh proved the similar inequality
\begin{equation} \label{eq:teokoh}
\kiss(G) \leq \begin{cases} 2n(m-n +1)/g & \text{if }g\text{ is odd} \\  2m(m-n +1)/g & \text{if }g\text{ is even} \end{cases} 
\end{equation}
for all $2$-connected graphs of girth $g\geq 3$ on $n$ vertices with $m$ edges. The bound is also attained for Moore graphs, but it is still unknown whether any irregular graph of even girth $g\geq 6$ and minimal degree at least $3$ can achieve equality. The paper \cite{TeoKoh} is closely related to earlier work of Homobono and Peyrat \cite{HP} on graphs such that every two edges are contained in a shortest cycle.

Apparently unaware of \cite{TeoKoh}, Azarija and \klav{} \cite{Azakla} showed that the number of convex oriented cycles in any graph of girth $g \geq 3$ with $n$ vertices and $m$ edges is at most $2n(m-n +1)/g$, with equality if and only if $G$ is a Moore graph of odd girth or an even cycle. Since shortest cycles of odd length are convex, their result implies the odd case of inequality \eqref{eq:teokoh}.

If we restrict to regular graphs, then inequality \eqref{eq:main} is stronger than \eqref{eq:teokoh} except for Moore graphs where they agree. It is also more general since it does not require $G$ to be simple nor $2$-connected. On the other hand, inequality \eqref{eq:teokoh} is better than \eqref{eq:main} for some irregular graphs such as complete bipartite graphs $K_{p,q}$ with $p\neq q$.

The proof of \thmref{thm:main} is both short and elementary. It essentially mimics Parlier's proof of an analogous inequality for hyperbolic surfaces \cite{Par} but in a simpler context.

Using the Moore bound, we can eliminate the girth from inequality \eqref{eq:main} and obtain a quantity that depends only on the degree and the number of vertices of the graph.
\begin{cor} \label{cor:main}
Let $d\geq 3$. If $G$ is a connected $d$-regular graph of girth $g\geq3$ on $n$ vertices, then
\begin{equation} \label{eq:log1}
\kiss(G) \leq  \frac{n(n(d - 2)+2)}{2\log_{d-1}\left(\frac{n(d-2) + 2}{d}\right)+1} \quad\text{if }g\text{ is odd}
\end{equation}
and
\begin{equation}  \label{eq:log2}
\kiss(G) \leq  \frac{nd(n(d -2)+2)}{4\log_{d-1}\left(\frac{n(d-2) + 2}{2}\right)} \quad\text{if }g\text{ is even,}
\end{equation}
with equality if and only if $G$ is a Moore graph.
\end{cor}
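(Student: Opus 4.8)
The plan is to remove the girth from \eqref{eq:main} by bounding $g$ from above using the Moore bound. Indeed, \thmref{thm:main} already gives $\kiss(G)\le nd\,(d-1)^{\lfloor g/2\rfloor}/g$, so the whole corollary reduces to controlling the single quantity $(d-1)^{\lfloor g/2\rfloor}/g$ purely in terms of $n$ and $d$. Rearranging the Moore bound does exactly that: for odd $g$ it says $(d-1)^{(g-1)/2}\le\bigl(n(d-2)+2\bigr)/d$, equivalently $g\le 2\log_{d-1}\!\bigl((n(d-2)+2)/d\bigr)+1=:g_0$; for even $g$ it says $(d-1)^{g/2}\le\bigl(n(d-2)+2\bigr)/2$, equivalently $g\le 2\log_{d-1}\!\bigl((n(d-2)+2)/2\bigr)=:g_1$.

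Next I would record the only analytic input needed: for every $d\ge 3$, the function $\phi(x)=(d-1)^{x/2}/x$ is strictly increasing on $[3,\infty)$, since its logarithmic derivative $\tfrac12\ln(d-1)-\tfrac1x$ is bounded below by $\tfrac12\ln 2-\tfrac13>0$ there. For odd $g$ one has $3\le g\le g_0$, so $(d-1)^{(g-1)/2}/g=(d-1)^{-1/2}\phi(g)\le(d-1)^{-1/2}\phi(g_0)$, and unwinding the definition of $g_0$ shows the last expression equals $\bigl((n(d-2)+2)/d\bigr)/g_0$; multiplying by $nd$ gives \eqref{eq:log1}. For even $g$ one has $4\le g\le g_1$ and $(d-1)^{g/2}/g=\phi(g)\le\phi(g_1)=\bigl((n(d-2)+2)/2\bigr)/g_1$; multiplying by $nd$ gives \eqref{eq:log2}.

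Finally I would track the equality case through this short chain. If $G$ is a Moore graph then the Moore bound holds with equality, hence $g=g_0$ (resp.\ $g=g_1$) and, as $\phi$ is \emph{strictly} increasing, the monotonicity step is an equality; since \thmref{thm:main} is also an equality for Moore graphs, \eqref{eq:log1} (resp.\ \eqref{eq:log2}) holds with equality. Conversely, each of \eqref{eq:log1} and \eqref{eq:log2} was obtained by following the inequality of \thmref{thm:main} with a single further inequality, so equality there forces equality in \eqref{eq:main}, and \thmref{thm:main} then forces $G$ to be a Moore graph. I do not anticipate any genuine obstacle here: the points that warrant attention are merely the monotonicity of $\phi$ on the range of admissible girths and the backward propagation of equality, both routine.
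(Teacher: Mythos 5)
Your argument is correct and follows essentially the same route as the paper: rearrange the Moore bound into an upper bound on $g$, use the strict monotonicity of $(d-1)^{g/2}/g$ (the paper phrases this via $x \mapsto (d-1)^{x}/(2x+1)$ and $y \mapsto (d-1)^{y}/(2y)$) to substitute that bound into \eqref{eq:main}, and propagate equality back through the chain. The only cosmetic difference is that in the converse direction the paper extracts Moore-ness from equality in the Moore bound itself rather than from the equality case of \thmref{thm:main}; both are valid.
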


In particular, the kissing number of a regular graph is bounded by a sub-quadratic function of its number of edges (equal to $nd/2$). This should be compared with the fact that kissing numbers of closed hyperbolic manifolds grow at most sub-quadratically with their volume \cite{Par,FBP}. 

Note that there are infinite sequences of regular graphs achieving equality in \corref{cor:main}, namely, Moore graphs, but their degree is necessarily unbounded. So an inte\-resting question is: How close to the bound can large graphs get if their degree is fixed? For any $d\geq 3$, it is easy to construct a sequence of $d$-regular graphs whose kissing numbers grow linearly with the number of vertices. For example, Cayley graphs have this property since they are vertex-transitive. It is much less obvious that super-linear growth rate can be achieved. Indeed, it follows from \thmref{thm:main} that graphs with super-linear kissing numbers must have logarithmically large girth, and finding such graphs is a notoriously difficult problem \cite{ErdosSachs,Margulis,Imrich,Weiss,LPS,Margulis2,Morgenstern,BiggsCubic,Dahan}. In no way does having large girth imply having large kissing number in general, but it turns out that a subsequence of the current record holders for girth, the Ramanujan graphs of Lubotzky--Phillips--Sarnak \cite{LPS}, do have super-linear kissing numbers.

\begin{thm}\label{thm_kiss_ramanujan_intro}
For every prime number $p\equiv 1 \mod 4$ there is a subsequence $(X^{p,q_k})_k$ of the $(p+1)$-regular graphs $X^{p,q}$ of Lubotzky--Phillips--Sarnak such that 
\[ \lim_{k\to\infty} \frac{\log(\kiss(X^{p,q_k}))}{\log(n_k)} = \frac{4}{3}\]
where $n_k$ is the number of vertices of $X^{p,q_k}$.
\end{thm}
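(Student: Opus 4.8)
The plan is to translate the problem of counting shortest cycles into a classical question about sums of three squares, using the quaternionic description of the Lubotzky--Phillips--Sarnak graphs. Fix $p\equiv 1 \bmod 4$ and restrict, throughout, to primes $q\equiv 1\bmod 4$ with $\left(\tfrac pq\right)=-1$; by quadratic reciprocity this is a condition on $q$ modulo $4p$ satisfied by infinitely many primes, and for such $q$ the graph $X^{p,q}$ is a $(p+1)$-regular \emph{bipartite} Cayley graph of $\PGL_2(\mathbb{F}_q)$ on $n=q^3-q$ vertices, in particular vertex-transitive. Hence $\kiss(X^{p,q})=(n/g)\,N_g$, where $g$ is the girth and $N_g$ the number of non-backtracking closed walks of length $g$ at a fixed vertex. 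The correspondence of \cite{LPS} between non-backtracking walks and integral quaternions identifies $N_g$, up to a factor bounded in terms of $p$, with the number of $(b_0,\dots,b_3)\in\ZZ^4$ with $b_0^2+\dots+b_3^2=p^g$, $q\mid b_1,b_2,b_3$, and $p\nmid\gcd(b_0,\dots,b_3)$. Since the graph is bipartite $g$ is even; writing $b_\ell=qc_\ell$ for $\ell\geq1$ and using that, for $q$ prime, $q^2\mid p^g-b_0^2$ forces $b_0\equiv\pm p^{g/2}\bmod q^2$, one obtains
\[
  N_g \;\asymp_p \sum_{\substack{|b_0|<p^{g/2}\\ b_0\equiv\pm p^{g/2}\ (q^2)}} r_3\!\left(\frac{p^g-b_0^2}{q^2}\right),
  \qquad r_3(m):=\#\{(x,y,z)\in\ZZ^3: x^2+y^2+z^2=m\}.
\]
The key feature is that the girth of these graphs is $4\log_p q+\bigO_p(1)$ (the lower bound $g\geq 4\log_p q-\log_p 4$ is in \cite{LPS}, and for our subsequence we will pin the girth down exactly), so the modulus $q^2$ is within a bounded factor of the ``sphere radius'' $p^{g/2}$ and the sum has only $\bigO_p(1)$ terms, each with argument of size $\asymp_p q^2$.

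To build the subsequence I would fix a small $\eps_0=\eps_0(p)\in(0,2/(p-1))$ and, for each large $\ell$, choose a prime $q=q_\ell\equiv 1\bmod 4$ with $\left(\tfrac pq\right)=-1$, lying in a prescribed residue class modulo a fixed power of $2$, and satisfying $2p^{\ell-1}<q^2<(2+\eps_0)p^{\ell-1}$; such a prime exists for all large $\ell$ because this interval has length $\gg_{\eps_0,p}p^{(\ell-1)/2}\to\infty$ while a positive proportion of the primes in it lie in any fixed arithmetic progression with modulus depending only on $p$. For such $q$ one checks the girth is exactly $2\ell$. No non-backtracking cycle of even length $2m'<2\ell$ exists, since $p^{m'}\leq p^{\ell-1}<q^2/2$ leaves no valid $b_0$ with $|b_0|<p^{m'}$ in the class $\pm p^{m'}\bmod q^2$; and since $p^\ell/q^2\in\left(\tfrac{p-1}{2},\tfrac p2\right)$, the value $b_0=\sigma:=p^\ell\bmod q^2=p^\ell-\tfrac{p-1}{2}q^2\in(0,q^2/2)$ is valid and produces the three-square target
\[
  M \;:=\; \frac{p^{2\ell}-\sigma^2}{q^2}\;=\;(p-1)\,p^\ell-\frac{(p-1)^2}{4}\,q^2\;\asymp_p\;q^2,
\]
which the $2$-adic congruence on $q$ is arranged to keep out of the set $\{4^a(8b+7)\}$ (an elementary finite check). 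Thus $M$ is a sum of three squares, which yields a genuine $2\ell$-cycle, so $g=2\ell$ and $\ell=2\log_p q+\bigO_p(1)$.

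It then remains to bound $N_{2\ell}$ on both sides. From above, each of the $\bigO_p(1)$ terms in the displayed sum is $\ll_\eps m^{1/2+\eps}$ with $m\ll_p q^2$, by the classical bound $r_3(m)\ll_\eps m^{1/2+\eps}$; hence $N_{2\ell}\ll_{\eps,p}q^{1+\eps}$ and $\kiss(X^{p,q_\ell})\ll_{\eps,p}n\,q^{1+\eps}/\ell\ll_{\eps,p}q^{4+\eps}/\ell$. From below, the single term $b_0=\sigma$ already gives $N_{2\ell}\geq c_p\,r_3(M)\gg_{\eps,p}M^{1/2-\eps}\gg_{\eps,p}q^{1-\eps}$, by the Gauss formula relating $r_3(M)$ to a class number together with Siegel's lower bound for that class number (ineffectivity is harmless here, as we only claim existence of a subsequence); hence $\kiss(X^{p,q_\ell})\gg_{\eps,p}q^{4-\eps}/\ell$. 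Since $n_\ell=q_\ell^3-q_\ell$ and $\ell\asymp\log q_\ell$, taking logarithms and letting $\ell\to\infty$ gives $\tfrac{4-\eps}{3}\leq\liminf\leq\limsup\leq\tfrac{4+\eps}{3}$ along $(q_\ell)_\ell$ for every $\eps>0$, so the limit equals $\tfrac43$.

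The part requiring genuine care — and the main obstacle — is the lower bound: one must determine the girth exactly for the constructed $q_\ell$ and then exhibit, among the $\bigO_p(1)$ quaternionic ``halves'' of length $\ell$, one whose three-square target $M$ is simultaneously of full size $\asymp_p q^2$ and representable as a sum of three squares (this is where the $2$-adic congruence on $q$ enters), after which Siegel's estimate supplies the $q^{1-\eps}$. The other delicate point is purely bookkeeping: making the normalizations in the quaternion correspondence of \cite{LPS} precise enough that the implied ``$\asymp_p$'' factors really are bounded.
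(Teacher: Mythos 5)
Your proposal is correct and follows essentially the same route as the paper: the quaternion correspondence turns the count of shortest cycles through a fixed vertex into representation numbers $r_3$ of $O_p(1)$ integers of size $O_p(q^2)$, the upper bound follows from $r_3(n)\ll_\eps n^{1/2+\eps}$, and the lower bound from choosing primes $q\equiv 1 \bmod 4$ with $\left(\frac{p}{q}\right)=-1$ in a controlled interval and $2$-adic congruence class so that one three-squares target is representable of full size, then applying Siegel's bound. The differences are minor: the paper takes $q\in(p^k,2p^k)$ and quotes the Biggs--Boshier girth formula with $m\in\{2,4\}$, whereas you place $q^2$ just above $2p^{\ell-1}$, rederive the girth directly from the absence of admissible $b_0$ at shorter lengths, and use $m=(p-1)/2$. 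One point your deferred ``elementary finite check'' must also cover: besides keeping $M$ out of $4^{\NN}(8\NN+7)$, you need the exact power of $4$ dividing $M$ to stay bounded, since $r_3(4n)=r_3(n)$ means Siegel only yields $r_3(M)\gg_\eps (M/4^{v})^{1/2-\eps}$; this is precisely the role of the hypothesis $16\nmid n$ in Lemma \ref{lem:BatemanSiegel} and of Lemmas \ref{lem:modular}--\ref{lem:sixteen}, and it is easily arranged (for instance the target for $m=4$ is $\equiv 8\bmod 16$, hence automatically representable with bounded $4$-part, with no congruence condition on $q$ needed at all).
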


The definition of the graphs $X^{p,q}$ will be given in Section \ref{sec:ramanujan} together with the proof of the above theorem. In \secref{sec:data}, we gather some numerical data on all connected simple cubic graphs with at most $24$ vertices. We compare their kissing number with their girth, diameter, and number of automorphisms. The data suggests that graphs with large kissing number have large girth, small diameter, and many automorphisms even when they are not Moore. We conclude the paper with some open questions in \secref{sec:questions}.

\section{Proof of the kissing bound}

We start with a bit of terminology. The graphs we consider are multigraphs, where loops and multiple edges are allowed. A \emph{walk} in a graph is a finite sequence of oriented edges such that the initial vertex of each edge after the first is equal to the terminal vertex of the previous edge. A walk is \emph{closed} if it starts and ends at the same vertex. Closed walks are considered up to cyclic permutations of their sequence of edges. A \emph{cycle} is a closed walk which is embedded, i.e., in which each vertex is the initial vertex of at most one edge in the walk. A walk is \emph{geodesic} if it does not backtrack, i.e., if it never traces an edge to immediately retrace it in the opposite direction. A \emph{closed geodesic} is a closed walk which is locally geodesic. The \emph{girth} of a graph $G$ is equal to the smallest length that a closed geodesic in $G$ can have. A shortest closed geodesic is necessarily a cycle.

\begin{lem} \label{lem:fellow_travel}
Let $G$ be a graph of girth $g$. If two closed geodesics of length $g$ in $G$ share a subwalk of length $\lfloor g/2 \rfloor+1$, then they coincide.
\end{lem}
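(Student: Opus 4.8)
The plan is to argue by contradiction: suppose $\gamma_1$ and $\gamma_2$ are two distinct closed geodesics of length $g$ that share a subwalk $w$ of length $\lfloor g/2\rfloor + 1$. Since they coincide along $w$ but are not equal as cyclic sequences, each $\gamma_i$ consists of the common subwalk $w$ followed by an arc $a_i$ of length $g - (\lfloor g/2\rfloor + 1) = \lceil g/2\rceil - 1$ joining the endpoints of $w$, and $a_1 \neq a_2$. The idea is to build a shorter closed geodesic out of the two arcs $a_1$ and $a_2$, contradicting the definition of girth.

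First I would let $x$ and $y$ denote the terminal and initial vertices of $w$ (so $a_i$ runs from $y$ back to $x$), and consider the concatenation $c = a_1 \cdot \overline{a_2}$, a closed walk based at $x$ of length $2(\lceil g/2\rceil - 1) \leq g - 1 < g$ (using $\lceil g/2 \rceil \leq (g+1)/2$). If I can show that $c$ contains a closed geodesic — equivalently, that after removing backtracking $c$ does not reduce to the trivial walk — then that closed geodesic has length strictly less than $g$, contradicting the girth assumption and forcing $\gamma_1 = \gamma_2$.

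The main obstacle is precisely ruling out the possibility that $c$ backtracks itself away to nothing, i.e.\ that $a_1$ and $a_2$ are in fact the same reduced walk (which would not immediately be a contradiction unless we also check they are traversed identically within $\gamma_i$). Here is where the geodesic hypothesis on $\gamma_1$ and $\gamma_2$ enters: at the vertex $x$, the last edge of $w$ is followed in $\gamma_i$ by the first edge of $a_i$, and since $\gamma_i$ does not backtrack, the first edge of $a_i$ is not the reverse of the last edge of $w$; similarly at $y$. More care is needed where the two arcs $a_1, a_2$ might share a common initial segment: I would peel off the maximal common prefix and maximal common suffix of $a_1$ and $a_2$, say of lengths $p$ and $s$, and absorb them into $w$ — but this only makes the shared subwalk longer, so I may as well assume $w$ is chosen maximal, i.e.\ $a_1$ and $a_2$ share neither a common first edge nor a common last edge. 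Then $c = a_1 \cdot \overline{a_2}$ is already reduced at its basepoint and at the junction between $a_1$ and $\overline{a_2}$, so any backtracking in $c$ occurs strictly inside $a_1$ or strictly inside $\overline{a_2}$; but $a_i$ is a subwalk of the geodesic $\gamma_i$ and hence itself geodesic, so no such backtracking exists. Therefore $c$ is a closed geodesic of length at most $g-1$ (and it is nonempty since $a_1 \neq a_2$ means $c$ is not a single point), contradicting the girth. Hence $\gamma_1$ and $\gamma_2$ must coincide.
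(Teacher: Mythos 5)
Your proof is correct and follows essentially the same route as the paper's: take the shared subwalk maximal, concatenate the two complementary arcs into a closed walk of length at most $2(\lceil g/2\rceil - 1) < g$, and use maximality plus the geodesy of each $\gamma_i$ to see this walk is non-backtracking, contradicting the girth. (There is a harmless labeling slip --- with $x$ the terminal and $y$ the initial vertex of $w$, the arcs $a_i$ run from $x$ to $y$, not the reverse --- but it does not affect the argument.)
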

\begin{proof}
Suppose that $\alpha$ and $\beta$ are closed geodesics of length $g$  in $G$ that share a maximal geodesic subwalk $\omega$ of length at least $\lfloor g/2 \rfloor+1$ but less than $g$. By maximality of $\omega$, the concatenation of the walks $\alpha \setminus \omega$ and $\beta \setminus \omega$ is non-backtracking, hence is a closed geodesic. Furthermore, it has length at most $2(g - \lfloor g/2 \rfloor-1) < g$, which is a contradiction.
\end{proof}

\begin{rem}
For hyperbolic surfaces, the corresponding statement says that distinct shortest closed geodesics cannot intersect with too small an angle \cite[Lemma 2.4]{Par}.
\end{rem}

We are now ready to prove our main result.

\begin{proof}[Proof of \thmref{thm:main}]
By definition, $g \kiss(G)$ is equal to the number of shortest closed oriented geodesics in $G$ equipped with a starting point. From such a closed geodesic $\gamma$, we can obtain a geodesic walk $p(\gamma)$ by retaining its first $\lfloor g/2 \rfloor+1$ edges after the starting point. By \lemref{lem:fellow_travel}, the map $p$ is injective. The total number of geodesic walks of length $\lfloor g/2 \rfloor+1$ in $G$ is at most $nd (d-1)^{\lfloor g/2 \rfloor}$ since there are $n$ choices for the starting vertex, at most $d$ choices for the first edge, and at most $(d-1)$ choices for each of the subsequent edges. This proves that $g\kiss(G) \leq nd (d-1)^{\lfloor g/2 \rfloor}$.

If equality holds, then all of these choices are available, which means that $G$ is $d$-regular. The map $p$ must also be surjective, so that every geodesic walk of length $\lfloor g/2 \rfloor+1$ in $G$ is contained in a closed geodesic of length $g$.

To prove that $G$ is a Moore graph, consider an open ball $B$ of radius $g/2$ centered at any vertex in $G$ if $g$ is odd, or at any midpoint of an edge if $g$ is even. The ball is a tree, for otherwise we could find a closed geodesic of length less than $g$. We also claim that the closure of $B$ is equal to $G$. Otherwise, there is some open half-edge $e$ in the complement of $B$ whose closure intersects $\partial B$ (this uses the hypothesis that $G$ is connected). By the previous paragraph, the geodesic walk of length $\lfloor g/2 \rfloor+1$ that goes from the center of $B$ to its boundary and continues through $e$ can be extended to a closed geodesic of length $g$. This implies that all the points in $e$ are within distance $g/2$ from the center of $B$, i.e., that $e \subset B$. This is a contradiction, from which we conclude that $\overline{B}=G$, or equivalently that $B$ contains all the vertices in $G$ contains all the vertices in $G$. This means that $G$ is a Moore graph (recall that the Moore bound is obtained by counting the vertices in $B$).

Conversely, suppose that $G$ is a Moore graph. Then any geodesic walk of length $\lfloor g/2 \rfloor+1$ in $G$ exits and re-enters the closed ball $\overline B$ of radius $g/2$ centered at its starting point since $\overline B= G$. It can therefore be extended to a closed walk $\gamma$ of length $g$ by continuing towards the center of $\overline B$. Since $\gamma$ has at most one backtrack (at the center of $\overline B$), it cannot be contracted to a point. It follows that $\gamma$ is a closed geodesic, for otherwise the closed geodesic in its homotopy class has length strictly less than $g$. We have shown that every geodesic walk of length $\lfloor g/2 \rfloor+1$ in $G$ is contained in a closed geodesic of length $g$. This proves that the map $p$ from the first paragraph is a surjection, hence that $g\kiss(G) = nd (d-1)^{\lfloor g/2 \rfloor}$.
\end{proof}

Let us compare our bound with the one from \cite{TeoKoh} for regular graphs. Let $G$ be a connected $d$-regular graph of girth $g$ with $n$ vertices and $m=nd/2$ edges. If $d=2$, then $G$ is a cycle (hence a Moore graph) and the two bounds coincide. Thus, we may assume that $d > 2$. If $g$ is odd, then the Moore bound gives
\[
n \geq 1 + d \sum_{j=0}^{(g-3)/2} (d-1)^j = 1 + d \,\frac{(d-1)^{(g-1)/2} - 1}{d-2}  \quad \text{or} \quad n(d-2)+2 \geq d  (d-1)^{\lfloor g/2 \rfloor}
\]
and hence
\[
\frac{2n(m-n +1)}{g} = \frac{n(n(d-2)+2)}{g}  \geq \frac{nd(d-1)^{\lfloor g/2 \rfloor}}{g}
\]
with equality if and only if $G$ is a Moore graph. Similarly, if $g$ is even then the Moore bound gives
\[
n \geq 2 \sum_{j=0}^{(g-2)/2} (d-1)^j = 2 \, \frac{(d-1)^{g/2} - 1}{d-2} \quad \text{or} \quad n(d-2)+2 \geq 2 (d-1)^{\lfloor g/2 \rfloor}
\]
so that
\[
\frac{2m(m-n +1)}{g} = \frac{nd(n(d-2)+2)}{2g} \geq \frac{nd(d-1)^{\lfloor g/2 \rfloor}}{g}
\]
with equality if and only if $G$ is a Moore graph.

We conclude that for regular graphs that are not Moore graphs, inequality \eqref{eq:main} is better than \eqref{eq:teokoh}, and the discrepancy is proportional to the excess in the Moore bound. 

Next, we prove \corref{cor:main}, which is essentially a calculation.

\begin{proof}[Proof of \corref{cor:main}]
Simple algebra shows that the Moore bound is equivalent to
\begin{equation}  \label{eq:moore1}
g\leq 2 \log_{d-1} \left(\frac{n(d-2) + 2}{d}\right) + 1 
\end{equation}
if $g$ is odd and
\begin{equation}   \label{eq:moore2}
g\leq 2 \log_{d-1} \left(\frac{n(d-2) + 2}{2}\right)
\end{equation}
if $g$ is even. Moreover, the functions $x \mapsto (d-1)^{x}/(2x+1)$ and $y \mapsto (d-1)^{y}/(2y)$ are strictly increasing for $x\geq 1$ and $y\geq 2$. This means that we can replace the girth $g$ in inequality \eqref{eq:main} with the right-hand side of \eqref{eq:moore1} or \eqref{eq:moore2} (depending on parity) and still obtain a valid inequality. If either of the resulting weaker inequalities \eqref{eq:log1} or \eqref{eq:log2} becomes an equality, then so does the corresponding \eqref{eq:moore1} or \eqref{eq:moore2}, and $G$ is a Moore graph. The converse follows from \thmref{thm:main} and algebra.
\end{proof}

\section{Regular graphs with super-linear kissing numbers} \label{sec:ramanujan}

In this section we prove that there are infinite sequences $(G_k)_k$ of regular graphs $G_k$ of fixed degree on $n_k$ vertices whose kissing numbers grow faster than $n_k^{4/3-\eps}$ for any $\eps>0$. The graphs we use form a subsequence of the Ramanujan graphs from \cite{LPS}. 

For each pair of unequal primes $p\equiv q \equiv 1 \mod 4$, Lubotzky, Phillips, and Sarnak construct a $(p+1)$-regular graph denoted $X^{p,q}$. It turns out that if the Legendre symbol $\left(\frac{p}{q}\right)$ is equal to $-1$, which we will assume throughout, then $X^{p,q}$ is a bipartite Cayley graph for $\PGL(2,\ZZ/q\ZZ)$ \cite[Proposition 3.3]{LPS}. In particular, the number of vertices $\card{X^{p,q}}$ of $X^{p,q}$ is equal to $\card{\PGL(2,\ZZ/q\ZZ)} = q(q^2-1).$

These graphs have many extremal properties. To name a few: 
\begin{itemize}
\item Their spectral gap is essentially as large as possible  \cite{Alon}\cite[Theorem 4.1]{LPS} (a $d$-regular graph is \emph{Ramanujan} if its spectral gap is at least $2\sqrt{d-1}$). 
\item Their girth is very large. Indeed, their asymptotic girth ratio
\[ \limsup_{q\to \infty} \frac{g(X^{p,q})}{\log_p(\card{X^{p,q}})} = \frac{4}{3} \]
 \cite[Theorem 3.4]{LPS} is the largest known so far.

\item Their diameter is very small \cite[Theorem 5.1]{LPS}, although not as small as possible \cite{Sardari} (asymptotically, random regular graphs achieve the theoretical lower bound on diameter coming from \eqref{eq:diam}  \cite{BF}).
\end{itemize}

Regarding the kissing numbers of the graphs $X^{p,q}$, we will show the following upper and lower bounds.

\pagebreak

\begin{thm}\label{thm_kiss_ramanujan} Let $p\equiv 1 \mod 4$ be a prime number. Then 
\begin{itemize}
\item[(a)] for every $\eps>0$ we have
\[ \kiss(X^{p,q}) \leq \card{X^{p,q}}^{4/3+\eps} \]
for all large enough prime numbers $q \equiv 1 \mod 4$ such that $\left(\frac{p}{q}\right)=-1$ and
\item[(b)] there exists an infinite sequence of prime numbers $(q_k)_k$ satisfying $q_k\equiv 1 \mod 4$ and $\left(\frac{p}{q_k}\right)=-1$ for all $k\in \NN$ such that for every $\eps>0$ we have 
\[ \kiss(X^{p,q_k}) \geq \card{X^{p,q_k}}^{4/3-\eps}\]
provided that $k \in \NN$ is large enough.
\end{itemize}
\end{thm}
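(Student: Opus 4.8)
The plan is to use the standard description of the graphs $X^{p,q}$ as quotients of the $(p+1)$-regular tree by arithmetic groups coming from the Lipschitz quaternions, and to translate the counting of shortest cycles into a counting problem for integral quaternions of small norm. Recall that $X^{p,q}$ is (for $\left(\frac{p}{q}\right)=-1$) a Cayley graph of $\PGL(2,\ZZ/q\ZZ)$ with respect to a generating set $S$ of size $p+1$ built from the $p+1$ solutions of $\alpha_0^2+\alpha_1^2+\alpha_2^2+\alpha_3^2=p$ with $\alpha_0>0$ odd and $\alpha_1,\alpha_2,\alpha_3$ even. A closed geodesic of length $\ell$ based at the identity corresponds to a non-backtracking word $s_1\cdots s_\ell$ in $S$ that is trivial in $\PGL(2,\ZZ/q\ZZ)$; lifting to integral quaternions, such a word is represented by a quaternion $\beta$ of norm $p^\ell$, primitive (not divisible by $p$) because the word is reduced, with $\beta \equiv \lambda \cdot (\text{scalar}) \bmod q$. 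The number of closed geodesics of length $\ell$ through a fixed vertex is therefore governed by the number of such quaternions, and by vertex-transitivity $\kiss(X^{p,q})$ is $\card{X^{p,q}}/(2g)$ times the number of reduced trivial words of length $g=g(X^{p,q})$.

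For part (a), the upper bound, I would argue as follows. By \thmref{thm:main} applied with $d=p+1$, together with the Moore-type bound \eqref{eq:moore2} (the graphs are bipartite, so $g$ is even), we have
\[
\kiss(X^{p,q}) \leq \frac{\card{X^{p,q}}\,(p+1)\,(p-1)^{\lfloor g/2\rfloor}}{g} \leq \card{X^{p,q}}\cdot p^{g/2}\cdot (\text{lower order}).
\]
Now I invoke the LPS girth bound $g(X^{p,q}) \geq \tfrac{4}{3}\log_p(\card{X^{p,q}})(1+o(1))$ in the \emph{reverse} direction is not what I want; instead I need an \emph{upper} bound on $g$. The key point is that LPS also show $g(X^{p,q}) \leq \tfrac{4}{3}\log_p(\card{X^{p,q}}) + O(1)$ along the same lines (in fact equality holds up to lower-order terms for the relevant subsequence, and in general $g \le 2\log_p(q^3) + O(1) = \tfrac{4}{3}\log_p|X^{p,q}|\cdot\frac{3}{2}\cdot\frac{2}{3}$; more precisely $|X^{p,q}| = q(q^2-1)$ so $\log_p|X^{p,q}| \sim 3\log_p q$, and the girth is $\sim 2\log_p q \sim \tfrac23\log_p|X^{p,q}|$). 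Plugging $p^{g/2} \le p^{\tfrac13\log_p|X^{p,q}| + O(1)} = |X^{p,q}|^{1/3}\cdot O(1)$ into the displayed inequality yields $\kiss(X^{p,q}) \le |X^{p,q}|^{4/3}\cdot |X^{p,q}|^{o(1)}$, which is (a). So part (a) is essentially \thmref{thm:main} combined with the known girth asymptotics and requires only care with the error terms.

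For part (b), the lower bound, the real work begins. I would fix the length $g = g(X^{p,q})$ and count reduced words of length $g$ in $S$ that are trivial in $\PGL(2,\ZZ/q\ZZ)$, i.e., count primitive integral quaternions $\beta$ of norm $p^g$ with $\beta$ congruent to a scalar modulo $q$. The number of (all, not necessarily primitive) representations of $p^g$ as a sum of four squares is $\sim 8\sigma(p^g) \sim 8 p^g$ for $p$ odd; subtracting imprimitive ones leaves $\sim 8(p^g - p^{g-1})$ quaternions of norm exactly $p^g$ not divisible by $p$, and each reduced word corresponds to $p+1$ of these (the choice of which of the $p+1$ associates/left-divisor chains, or a factor of $2(p+1)$ accounting for units $\pm1$ and the $p+1$ starting letters — the exact constant must be pinned down). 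The congruence condition $\beta \equiv \text{scalar} \bmod q$ cuts this down by roughly a factor of $q^3 \sim |X^{p,q}|$, giving heuristically $\sim c\, p^g / q^3$ reduced trivial words, hence $\kiss(X^{p,q}) \approx \frac{|X^{p,q}|}{2g}\cdot \frac{c\,p^g}{q^3} \approx \frac{c\, p^g}{2g}$. Since $g \sim \tfrac23 \log_p|X^{p,q}|$ we get $p^g \sim |X^{p,q}|^{2/3}$ — which is too small. The resolution is that $g$ can be substantially smaller than its conjectural maximum for a positive-density subsequence of $q$: one does not control $g(X^{p,q})$ precisely, only that $g \le \tfrac43\log_p q^3 + O(1)$ always and $g \ge \tfrac43 \log_p q^3 - O(1)$ only along a subsequence. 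I would therefore instead choose the subsequence $(q_k)$ so that the girth is as large as the LPS lower bound permits, and count \emph{all} reduced closed geodesics of length exactly $g_k$: by the above the expected count is $\gg p^{g_k}/q_k^3 = p^{g_k}/(\text{const}\cdot|X^{p,q_k}|)$, and with $g_k \ge \tfrac43\log_p q_k^3 - O(1) = \tfrac43\log_p|X^{p,q_k}| - O(1)$ this gives $p^{g_k} \ge |X^{p,q_k}|^{4/3-o(1)}$, hence a count $\gg |X^{p,q_k}|^{1/3-o(1)}$ of geodesics through each vertex and $\kiss(X^{p,q_k}) \gg |X^{p,q_k}|^{4/3-\eps}$.

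The main obstacle, and where the plan needs genuine care, is twofold: first, establishing that the congruence condition modulo $q$ really does cut the quaternion count down by essentially the full factor $|X^{p,q}|$ with no loss — this is where an equidistribution statement for the images of norm-$p^g$ quaternions in $\PGL(2,\ZZ/q\ZZ)$ is needed, and it is exactly the Ramanujan property (the nontrivial eigenvalue bound $2\sqrt{p}$) that supplies the error control; so part (b) ultimately rests on the spectral gap, invoked via a standard mixing/counting estimate for non-backtracking walks of length $g$ returning to their origin, namely that this number is $\frac{p^g}{|X^{p,q}|}(1+o(1))$ up to the Ramanujan error $O(g\, p^{g/2})$, which is lower-order precisely because $p^{g/2} = |X^{p,q}|^{2/3+o(1)} \ll |X^{p,q}|^{1/3}\cdot p^g/|X^{p,q}|$ fails — so in fact the error term $g\,p^{g/2}$ is \emph{not} automatically negligible and one must either use that reduced/non-backtracking closed walks are almost all actual geodesics or refine the count. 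Resolving this tension — showing that the main term $p^g/|X^{p,q}|$ genuinely dominates — is the crux, and I expect it to require the full strength of the Ramanujan bound together with the combinatorial input that a non-backtracking closed walk of minimal length $g$ is automatically a geodesic (a shortest closed geodesic is a cycle, by the remark before \lemref{lem:fellow_travel}), so that the count of closed geodesics of length exactly $g$ equals the count of reduced trivial words of that length, with no homotopy correction. The second, more technical obstacle is simply bookkeeping the unit/associate factors $2(p+1)$ correctly so that the constant $c$ is positive; this is routine but must be done.
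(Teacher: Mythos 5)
Your part (a) does not work. You claim that \thmref{thm:main} combined with the girth asymptotics gives the exponent $4/3$, but your arithmetic slips at the crucial point: since $g \sim \tfrac{4}{3}\log_p\card{X^{p,q}}$, you have $p^{g/2} \sim \card{X^{p,q}}^{2/3}$, not $\card{X^{p,q}}^{1/3}$, so the general kissing bound only yields $\kiss(X^{p,q}) \lesssim \card{X^{p,q}}^{5/3}/\log\card{X^{p,q}}$. (The paper makes exactly this point in the remark following the theorem statement: \thmref{thm:main} is \emph{not} sharp for these graphs.) The true content of part (a) is an arithmetic saving over the tree-counting bound: by the Biggs--Boshier structure theorem, every shortest closed geodesic through the identity comes from a representative with $x_0=|p^{g/2}-mq^2|$ for one of $O(p)$ even values of $m$, so the count through a fixed vertex is controlled by $r_3(2mp^{g/2}-m^2q^2)$ with argument $O(q^2)$, and the upper bound $r_3(n)\ll_\eps n^{1/2+\eps}$ (Bateman's formula plus Siegel's bound on $L(1,\chi)$, \lemref{lem:BatemanSiegel}) gives $q^{1+\eps}$ geodesics per vertex instead of the trivial $\sim q^2$. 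Without this input the exponent $4/3$ is out of reach.

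For part (b) you set up the correct framework (closed geodesics through the identity correspond to primitive quaternions of norm $p^g$ congruent to scalars mod $2q$), but you candidly leave the decisive step unresolved, and the tool you propose cannot resolve it: as you yourself compute, the Ramanujan/mixing error term $O(g\,p^{g/2}) \sim q^2\log q$ swamps the expected main term $p^g/\card{X^{p,q}} \sim q$, so no equidistribution argument at the scale of the girth can produce the lower bound. The paper avoids spectral methods entirely. It again uses Biggs--Boshier to identify the shortest geodesics with solutions of the \emph{ternary} equation $4(y_1^2+y_2^2+y_3^2)=2mp^{g/2}-m^2q^2$ with $m\in\{2,4\}$, so the count is exactly a value $r_3(n)$ with $n\asymp q^2$; the lower bound $r_3(n)\gg_\eps n^{1/2-\eps}$ from \lemref{lem:BatemanSiegel} then gives $q^{1-\eps}$ geodesics per vertex --- but only if $n$ is actually representable ($n\notin 4^{\NN}(8\NN+7)$) and $16\nmid n$. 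Arranging this is the remaining work: one chooses the primes $q_k$ in a suitable arithmetic progression modulo $16p$ (using quadratic reciprocity and Dirichlet/Siegel--Walfisz, as in \lemref{lem:modular} and \lemref{lem:sixteen}) so that $q_k^2\not\equiv p \bmod 16$ and $p^k<q_k<2p^k$, which forces the relevant $n$ to satisfy both conditions. None of this congruence engineering appears in your proposal, and it is not optional: for a bad choice of $q$ the quantity $p^{g/2}-q^2$ could lie in $4^{\NN}(8\NN+7)$ and the count from that value of $m$ would vanish. Your bookkeeping concern about unit and associate factors is comparatively harmless, since one only needs the count up to a multiplicative constant.
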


Theorem \ref{thm_kiss_ramanujan_intro} stated in the introduction is an immediate consequence of Theo\-rem \ref{thm_kiss_ramanujan}.

\begin{rem}
Since the girth of $X^{p,q}$ is known when $\left(\frac{p}{q}\right)=-1$ (see \eqnref{eq:girth} below), we can compare the above result with what comes out of Theorem \ref{thm:main}. After substituting the girth and simplifying, the latter predicts that
\[ \kiss(X^{p,q}) \leq \frac{q(q^2-1) p q^2}{4 \log_p(q)} \sim \frac{3p}{4} \frac{\card{X^{p,q}}^{5/3}}{\log(\card{X^{p,q}})} \text{ as } q\to \infty \]
which is considerably larger than the actual asymptotic growth given in \thmref{thm_kiss_ramanujan_intro}.
\end{rem}

We will now describe the graphs $X^{p,q}$ and some of their properties in more details. After that, we will recall some classical facts about the number of integer solutions to certain quadratic equations and at the end of this section, we will combine these with some elementary modular arithmetic to obtain a proof of Theorem \ref{thm_kiss_ramanujan}.

\subsection{The graphs}

We very briefly recall the construction from \cite{LPS} (see also \cite{Lubotzky,Sarnak,DSV}). The goal is to construct a graph $X^{p,q}$ for all unequal primes $p,q$ congruent to $1 \mod 4$ such that $\left(\frac{p}{q}\right)=-1$. These graphs can be defined more generally for primes $q$ with $\left(\frac{p}{q}\right)=1$ \cite{LPS} and for non-primes $q$ \cite{Lubotzky}, but the case we consider makes it easier to control the girth and the number of vertices.

Let 
\[ \HH(\ZZ) = \st{x_0+x_1\bi+x_2\bj+x_3\bk}{x_0,x_1,x_2,x_3 \in \ZZ}, \quad \bi^2=\bj^2=\bk^2 = \bi\bj\bk = -1\]
denote the integral Hamiltonian quaternions. The norm of any $x\in \HH(\ZZ)$ is defined as
\[ N(x) = x_0^2+x_1^2+x_2^2+x_3^2.\]

Now consider the set
\[ \Lambda(2) = \st{x\in\HH(\ZZ)}{ x\equiv 1 \mod 2 \text{ and } N(x) = p^k \text{ for some } k\in \NN}\big/\sim\]
where 
\[ x\sim y \quad \Longleftrightarrow \quad x = \pm p^k y \text{ for some } k\in\ZZ \]
and $\NN$ is the set of non-negative integers. Multiplication in the quaternions induces a group structure on $\Lambda(2)$. Lubotzky, Phillips and Sarnak prove that $\Lambda(2)$ is a free group with free symmetric generating set 
\[ S = \st{x=x_0+x_1\bi+x_2\bj+x_3\bk \in \HH(\ZZ)}{ N(x)=p,\; x_0>0 \text{ and } x_0\equiv 1 \mod 2 }/\sim.\]
It follows from Jacobi's theorem on the number of representations of an integer as the sum of four squares that $\card{S} = p+1$.

We write 
\[ \Lambda(2q) = \st{[x_0+x_1\bi+x_2\bj+x_3\bk]\in \Lambda(2)}{ 2q | x_j,\;j=1,2,3}.\]
This is a normal subgroup of $\Lambda(2)$.

The graphs of Lubotzky--Phillips--Sarnak are defined to be the Cayley graphs
\[X^{p,q} = \mathrm{Cay}(\Lambda(2)/\Lambda(2q), S). \]
We will also write
\[X^p = \mathrm{Cay}(\Lambda(2), S) \]
for the infinite $(p+1)$-regular tree.

\subsection{Closed geodesics}

Since $X^{p,q}$ is a Cayley graph, it is vertex-transitive. So, in order to understand the closed geodesics in this graph, it suffices to study those based at our favorite vertex, the identity element in $X^{p,q}$.

A geodesic walk starting from $[1]$ in $X^p$ projects to a closed geodesic in $X^{p,q}$ if and only if its endpoint lies in $\Lambda(2q)$. Moreover, the length of a geodesic walk can be read out from the norm of an appropriate representative of its endpoint. Indeed, the norm on $\HH(\ZZ)$ is multiplicative and the generators in $S$ all have representatives of norm $p$. Therefore, if the geodesic walk has length $k$ and endpoint $\xi \in \Lambda(2)$ then
\[ N(x_0+x_1\bi+x_2\bj+x_3\bk) = p^k\]
where $x_0+x_1\bi+x_2\bj+x_3\bk \in \HH(\ZZ)$ is the unique representative of $\xi$ such that
\[ x_0 >0 \quad \text{and} \quad p \nmid \gcd(x_0,x_1,x_2,x_3).\]

On the other hand, given an integer solution to the equation
\begin{equation}\label{eq_quadratic_form}
 y_0^2 + (2q)^2 ( y_1^2 + y_2^2 + y_ 3^2) = p^k, \quad y_0>0,  \quad p \nmid \gcd(y_0,y_1,y_2,y_3)
\end{equation}
we obtain a closed geodesic of length $k$ based at the identity in $X^{p,q}$ by projecting the unique geodesic walk from $[1]$ to $[y_0+2qy_0\bi+2qy_0\bj+2qy_0\bk]$  in $X^p$.

Thus, the number of closed geodesics of length $k$ through the identity in $X^{p,q}$ is the number of integral solutions to \eqnref{eq_quadratic_form}. Note that all closed geodesics in $X^{p,q}$ have even length since this graph is bipartite.

\subsection{Quadratic forms}

In order to control the number of shortest closed geodesics in the graphs $X^{p,q}$, we need to bound the number of solutions to \eqnref{eq_quadratic_form}. For this, we start by recalling some facts about the number of representations of an integer as the sum of three squares. 

First of all, for any positive integer $n$, we set
\[ r_3(n) = \card{\st{(a,b,c)\in\ZZ^3}{a^2+b^2+c^2 =n}}.\]
Legendre's theorem states that $r_3(n)>0$ if and only if $n\notin 4^\NN (8\NN+7)$, where $\NN$ is again the set of non-negative integers. We will require the following more precise estimates.

\begin{lem} \label{lem:BatemanSiegel}
For every $\eps>0$, there exists a constant $C_\eps>0$  such that for every positive integer $n$ we have
\begin{equation}\label{eq_rep_three_squares_upper}
r_3(n) \leq C_\eps n^{1/2+\eps}.
\end{equation}
If furthermore $r_3(n) \neq 0$ and $16 \nmid n$, then
\begin{equation}\label{eq_rep_three_squares_lower}
r_3(n) \geq \frac{1}{C_\eps} n^{1/2-\eps}.
\end{equation}
\end{lem}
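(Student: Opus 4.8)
The plan is to deduce \lemref{lem:BatemanSiegel} from the classical formula expressing $r_3(n)$ in terms of class numbers of imaginary quadratic orders, together with well-known bounds on those class numbers. First I would recall Gauss's formula: writing $n = 4^a m$ with $4 \nmid m$, one has $r_3(n) = 0$ if $m \equiv 7 \bmod 8$, and otherwise $r_3(n)$ equals $12 H(m)$, $24 H(m)$, or $r_3(m)$ according to the residue of $m$ modulo $8$, where $H$ is the Hurwitz class number. In particular, up to a bounded factor and a bounded power of $2$, $r_3(n)$ is comparable to the class number $h(-D)$ of the fundamental discriminant $-D$ associated to the squarefree part of $n$, times a divisor-like factor coming from the square part. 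So the two inequalities will follow from bounds of the shape $D^{1/2-\eps} \ll h(-D) \ll D^{1/2+\eps}$.

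Concretely, I would proceed as follows. Step one: state the reduction to class numbers precisely, citing a standard reference (e.g.\ Grosswald, or Iwaniec's book) for the identity $r_3(n) \asymp$ (class number) $\times$ (factor depending on the square part of $n$), and absorb the square-part factor into $n^\eps$ using the divisor bound $\sigma_0(k) \ll_\eps k^\eps$. Step two: for the upper bound \eqref{eq_rep_three_squares_upper}, invoke the classical bound $h(-D) \ll_\eps D^{1/2+\eps}$ (which follows from $L(1,\chi_{-D}) \ll \log D$ and the class number formula); combined with step one this gives $r_3(n) \ll_\eps n^{1/2+\eps}$. Step three: for the lower bound \eqref{eq_rep_three_squares_lower}, note that when $r_3(n) \neq 0$ the relevant discriminant is nonzero, so Siegel's theorem $h(-D) \gg_\eps D^{1/2-\eps}$ applies; the hypothesis $16 \nmid n$ (equivalently $a \leq 1$ in $n = 4^a m$) guarantees we are in the regime $r_3(n) = 12H(m)$ or $24H(m)$ rather than $r_3(n) = r_3(m)$, so the class number genuinely controls $r_3(n)$ from below, and the squarefree part of $m$ is $\gg n^{1-\eps}$ so its class number is $\gg n^{1/2-\eps}$.

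The main obstacle, and the reason the constant $C_\eps$ is ineffective, is Siegel's lower bound on the class number: it is the one genuinely hard input, and it is non-effective, which is why the statement is phrased with an unspecified constant $C_\eps$ rather than an explicit one. Everything else — Gauss's three-squares formula, the upper bound on $L(1,\chi)$, and the divisor bound — is elementary or classical and effective. A secondary technical point to be careful about is the bookkeeping of the factors of $2$ and the square part of $n$: one must check that the condition $16 \nmid n$ is exactly what is needed to stay in the case where $r_3(n)$ is a class number times a harmless factor, and that the divisor factor coming from $n = f^2 d$ with $d$ squarefree is genuinely $O_\eps(n^\eps)$, which it is. I expect the write-up to be short once the class number formula is quoted; the bulk is simply citing Siegel and the standard $L$-function estimates and checking the parity/2-adic cases do not cause trouble.
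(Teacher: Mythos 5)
Your route is in substance the same as the paper's. The paper quotes Bateman's explicit formula $r_3(n)=\frac{16}{\pi}\sqrt{n}\,L\left(1,\left(\frac{-4n}{\cdot}\right)\right)P(n)Q(n)$, bounds the elementary factors $P$ and $Q$ from both sides (the hypothesis $16\nmid n$ is used only to force $Q(n)\geq 1/2$ when $r_3(n)\neq 0$), and invokes Siegel's ineffective bounds $n^{-\eps}\ll L(1,\cdot)\ll n^{\eps}$. Gauss's three-squares formula combined with the class number formula is the same statement in different packaging, and the one genuinely hard, ineffective input (Siegel) is identical; your Step two and your remark about why $C_\eps$ is ineffective are correct.

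There is, however, one step in your Step three that is false as stated: the squarefree part of $m$ is \emph{not} $\gg n^{1-\eps}$ in general. If $n=m$ is an odd perfect square, its squarefree part is $1$, the associated fundamental discriminant is bounded, and Siegel's lower bound on $h(-D)$ for the \emph{fundamental} discriminant alone gives nothing. What actually saves the argument is that $r_3(m)$ equals a \emph{Hurwitz} class number $12H(4m)$ or $24H(m)$, i.e., a weighted class number of the generally non-maximal order of discriminant $-4m$ or $-m$; writing $N=|D|f^2$ with $D$ fundamental, the conductor contributes a factor $\sum_{d\mid f}\mu(d)\left(\frac{D}{d}\right)\sigma_1(f/d)\gg_\eps f^{1-\eps}$, which together with Siegel's $h(-D)\gg_\eps|D|^{1/2-\eps}$ yields $H(N)\gg_\eps N^{1/2-\eps}$ whenever $H(N)\neq 0$. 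You need to insert this conductor computation (or cite the formula for class numbers of non-maximal orders) in place of the squarefree-part claim; this is precisely the role played by the factor $P(n)\geq 1$ in Bateman's formula, which is why the paper's version of the argument never has to confront the issue. The rest of your bookkeeping --- using $16\nmid n$ to keep $a\leq 1$ so that $m\geq n/4$, and the divisor bound for the upper estimate --- is fine.
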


\begin{rem}
The number $16$ above could be replaced by any power of $4$, but this particular statement will suffice for our needs.
\end{rem}

\begin{proof}
Bateman \cite[Theorem B]{Bateman} (see also \cite[\S 4.10]{Grosswald} for an exposition) proved the exact formula
\[ r_3(n) = \frac{16}{\pi}\, \sqrt{n}\; L\left(1,\left( \frac{-4n}{\cdot}\right)\right) P(n) \, Q(n) \]
where the functions $L$, $P$ and $Q$ are defined as follows. First,
\[ L\left(s,\left( \frac{-4n}{\cdot}\right)\right) = \sum_{m=1}^\infty \left( \frac{-4n}{m} \right) \frac{1}{m^s}\]
for all $s\in \CC$ for which the right-hand side converges. This is the $L$-function associated to the Kronecker symbol $\left( \frac{-4n}{\cdot}\right)$, a generalization of the Legendre symbol. Second,
\[P(n) = \prod_{\substack{p^2\mid n \\ p \text{ an odd prime}}} \left(1 + \sum_{j=1}^{b_{p,n}-1}p^{-j} + p^{-b_{p,n}}\cdot\left(1- \left(\frac{-p^{-2b_{p,n}}n}{p} \right) \frac{1}{p} \right)^{-1} \right)
\]
where $b_{p,n}=\max\st{b\in\NN}{ p^{2b} \mid n }$. 
Finally, if we write $n = 4^a n_1$ such that $4\nmid n_1$ then
\[ Q(n) = \left\{
\begin{array}{ll}
0 & \text{if } n_1 \equiv 7 \mod 8, \\
2^{-a} & \text{if } n_1 \equiv 3 \mod 8,\\
3 \cdot 2^{-a-1} & \text{if } n_1 \equiv 1,2,5,6 \mod 8.
\end{array}
 \right.\]

Note that $P(n) \geq 1$ and $Q(n)\leq 3/2$ for all $n\geq 1$. We also have $P(n) \leq C (\log n)^2$ for some constant $C>0$. Indeed,
\begin{multline*}
P(n) \leq \prod_{\substack{p^2\mid n \\ p \text{ an odd prime}}} \left( \sum_{j=0}^{b_{p,n}-1}p^{-j} + p^{-b_{p,n}}\cdot \frac{1}{1-\frac{1}{p}} \right)
\leq   \prod_{\substack{p^2\mid n \\ p \text{ an odd prime}}} \left(\sum_{j=0}^{b_{p,n}} p^{-j} \right) \prod_{\substack{p \leq  n \\ p \text{ a prime}}} \frac{1}{1-\frac{1}{p}}
\end{multline*}
and each product on the right is essentially bounded by $\log n$ since
\[ \prod_{\substack{p^2\mid n \\ p \text{ an odd prime}}} \left(\sum_{j=0}^{b_{p,n}} p^{-j} \right) \leq \sum_{k=1}^{\floor{\sqrt{n}}}\frac{1}{k} \leq \frac12\log n +1 \quad \text{and} \quad \prod_{\substack{p \leq  n \\ p \text{ a prime}}} \frac{1}{1-\frac{1}{p}} 
\stackrel{n\to\infty}{\sim} e^\gamma \log n, \]
where the first inequality is obtained by expanding the product and unique factorization, and the asymptotic is Mertens's third theorem ($\gamma$ denotes the Euler-Mascheroni constant).  If we further know that $16 \nmid n$, then either $r_3(n) = 0$ or $Q(n)\geq 1/2$. 

Siegel \cite[p. 406-409]{Siegel} showed that for every $\eps > 0$, there exists a constant $B_\eps>0$ such that 
\[ \frac{1}{B_\eps\,n^\eps} \leq L\left(1,\left( \frac{-4n}{\cdot}\right) \right) \leq B_\eps\,n^\eps \]
for all $n\geq 1$. Combining this with the above upper and lower bounds on $P$ and $Q$ yields the desired results (the logarithmic factors in the upper bound can be absorbed in $n^\eps$).
\end{proof}

\subsection{The girth}

A key ingredient needed in the proof of \thmref{thm_kiss_ramanujan} is the girth of $X^{p,q}$, which was determined in \cite{BiggsBoshier}. In that paper, Biggs and Boshier proved that for every class $\xi \in \Lambda(2q)$ at an even distance $k$ from the identity in $X^p$, there exists a representative $x=x_0 + x_1\bi+x_2\bj + x_3 \bk$ such that
\[ x_0 = |p^{k/2}-mq^2| \]
for some positive even number $m$ \cite[Lemma 1]{BiggsBoshier}. Writing $x=x_0 + 2q y_1 \bi+2q y_2\bj + 2q y_3 \bk$ as before, \eqnref{eq_quadratic_form} turns into
\begin{equation} \label{eq:special_rep}
 2mp^{k/2} - m^2 q^2 = 4 (y_1^2 + y_2^2 + y_3^2).
\end{equation}
By Legendre's three-square theorem, this equation has integer solutions if and only if 
\[2mp^{k/2} - m^2 q^2 \geq 0 \quad \text{and} \quad 2mp^{k/2} - m^2 q^2 \notin 4^{\NN+1} (8\NN+7). \]

On page 193 of \cite{BiggsBoshier}, Biggs and Boshier show that there exists a non-trivial element $\xi \in \Lambda(2q)$ closest to the identity in $X^p$ which has a representative as above with $m=2$ or $m=4$. Moreover, the girth of $X^{p,q}$ is equal to the smallest even number $k$ such that $p^{k/2} - q^2 \geq 0$ if \eqnref{eq:special_rep} a solution with $m=2$, and to the smallest even number $k$ such that $2p^{k/2} - 4q^2 \geq 0$ otherwise. Concretely, the girth is given by
\begin{equation} \label{eq:girth}
g = g(X^{p,q}) = \left\{
\begin{array}{ll}
2 \ceil{\log_p(q^2)} & \text{if } p^{\ceil{\log_p(q^2)}}-q^2 \notin 4^\NN (8\NN+7) \\[2mm]
2 \ceil{\log_p(2q^2)} & \text{otherwise}.
\end{array}
\right.
\end{equation}
Fix $m=2$ in the first case and $m=4$ in the second case. If we assume that $q>p$, then $p$ does not divide $x_0=|p^{g/2} - mq^2|$ so that all the integer solutions $(y_1,y_2,y_3)$ to
\eqnref{eq:special_rep} with $k=g(X^{p,q})$ correspond to distinct non-trivial elements \[[x_0 + 2q y_1 \bi+2q y_2\bj + 2q y_3 \bk]\in \Lambda(2q)\] closest to the identity in $X^p$. That is to say, the number of shortest closed geodesics through the identity in $X^{p,q}$ is at least
\begin{equation}\label{eq_loops_id}
\left\{ \begin{array}{ll}
r_3(p^{g/2} - q^2) &  \text{if } p^{g/2}-q^2 \notin 4^\NN (8\NN+7) \\
r_3(2p^{g/2} - 4q^2) & \text{otherwise}.
\end{array} \right.
\end{equation}
where $g$ is the girth of $X^{p,q}$.

\subsection{The upper bound}  

The upper bound on the kissing number of the graphs $X^{p,q}$ follows from the discussion in the last two subsections.

\begin{proof}[Proof of Theorem \ref{thm_kiss_ramanujan} (a)]
Assume that $q>p$ and let $g$ be the girth of $X^{p,q}$, given by \eqnref{eq:girth}. We will again use the fact that any class $\xi \in \Lambda(2q)$ corresponding to a shortest closed geodesic through the identity in $X^{p,q}$ can be represented by an integral quaternion $x=x_0+x_1\bi+x_2\bj+x_3\bk$ with $x_0 = |p^{g/2}-mq^2|$ for some even number $m$.
First, we want to bound the number of possibilities for $m$. Since 
\[ g \leq 2(\log_p(2q^2)+1) \quad \text{and} \quad 2mp^{g/2}-m^2q^2 \geq 0\]
(see the previous subsection), this implies that
\[ (4mp -m^2)q^2\geq 0\]
and hence that $m\leq 4p$. So, we get that the number of shortest closed geodesics through the identity in $X^{p,q}$ is at most
\[ 4p\cdot \max_{\substack{2\leq m \leq 4p \\ \text{even}}} \left\{ r_3\left(2mp^{g/2} - m^2 q^2 \right) \right\} \leq 4p \cdot \max\st{r_3(n)}{ 0\leq n \leq \frac{7}{4}p^2 q^2},\]
where we used that $4mp-m^2 \leq 7p^2/4$. Now we use the upper bound \eqref{eq_rep_three_squares_upper} on the representation number and get that for every $\eps>0$ the number of shortest closed geodesics through the identity in $X^{p,q}$ is at most
\[ q^{1+\eps} \]
for all $q$ large enough. Multiplying by the total number of vertices (cubic in $q$), we obtain the upper bound we were after.

\end{proof}

\subsection{The lower bound}

Our goal is to find a sequence of primes $(q_k)_k$ that are equal to $1 \mod 4$ and render the expressions in \eqref{eq_loops_id} large. In view of \lemref{lem:BatemanSiegel}, it suffices to make sure that the argument of $r_3$ in \eqref{eq_loops_id} is not in the set  $4^\NN (8\NN+7)$ nor is divisible by sixteen. The following lemma will do the trick.

\begin{lem} \label{lem:modular}
For every large enough number $k$, there exists a prime $q_k \equiv 1 \mod 4$ such that $\left(\frac{p}{q_k} \right) = -1$, $q_k^2 \not\equiv p  \mod 16$ and $p^k < q_k < 2p^k$.
\end{lem}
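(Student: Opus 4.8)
The plan is to produce the primes $q_k$ by a Dirichlet/Chebotarev-type argument: I want to impose finitely many congruence conditions on $q$ (modulo $4$, modulo $p$ via the Legendre symbol, and modulo $16$) that are simultaneously satisfiable, and then use a quantitative prime-counting result to find such a prime in the short interval $(p^k, 2p^k)$ once $k$ is large. First I would reformulate the conditions $q\equiv 1 \bmod 4$ and $\left(\frac{p}{q}\right)=-1$. Since $p\equiv 1\bmod 4$, quadratic reciprocity gives $\left(\frac{p}{q}\right)=\left(\frac{q}{p}\right)$, so the condition $\left(\frac{p}{q}\right)=-1$ is equivalent to $q$ lying in a specified union of residue classes modulo $p$ (the nonsquares), which is a nonempty set of classes coprime to $p$. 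Combined with $q\equiv 1\bmod 4$, this is a nonempty union of arithmetic progressions modulo $4p$.

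Next I would handle the extra condition $q_k^2\not\equiv p\bmod 16$. Working modulo $16$, the squares of odd numbers are only $1$ and $9$; so $q^2\equiv p\bmod 16$ can happen only if $p\equiv 1$ or $p\equiv 9\bmod 16$, and even then it excludes at most the residue classes $q\equiv\pm1\bmod 8$ or $q\equiv\pm3\bmod 8$ respectively — in any case at most half of the odd residues modulo $16$. The key point is that this excludes only some residue classes modulo $16$ and never all of them, so intersecting with the progressions modulo $4p$ coming from the previous paragraph still leaves a nonempty union of arithmetic progressions modulo $\mathrm{lcm}(16,4p)=16p$ (here I use $\gcd(16,p)=1$, so by CRT the conditions mod $16$ and mod $p$ are independent, and $q\equiv 1\bmod 4$ is implied by a choice mod $16$). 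Each such progression has first term coprime to the modulus $16p$.

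Finally, I would invoke a prime-counting statement for short intervals in arithmetic progressions: for a fixed modulus, the number of primes $q\equiv a\bmod 16p$ with $x < q < 2x$ is $\sim \frac{1}{\varphi(16p)}\,\frac{x}{\log x}$ as $x\to\infty$, which is certainly positive for $x$ large. Applying this with $x=p^k$ (so $x\to\infty$ as $k\to\infty$) produces, for all sufficiently large $k$, a prime $q_k$ in one of the admissible progressions with $p^k<q_k<2p^k$, and by construction $q_k\equiv 1\bmod 4$, $\left(\frac{p}{q_k}\right)=-1$, and $q_k^2\not\equiv p\bmod 16$. I expect the main (really the only) subtlety to be citing the right tool: one does not need the full strength of a Bombieri–Vinogradov or Linnik-type uniform estimate, only primes in a fixed arithmetic progression lying in a short interval $(x,2x)$, which follows from the prime number theorem for arithmetic progressions together with any standard short-interval result (or even just from Dirichlet's theorem if one is willing to weaken $(p^k,2p^k)$ to a slightly longer interval and re-examine the girth bounds). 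The bulk of the work is purely the bookkeeping of checking that the congruence conditions modulo $4$, $p$, and $16$ are mutually compatible, which the two paragraphs above outline.
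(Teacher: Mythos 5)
Your proposal is correct and follows essentially the same route as the paper's proof: combine the conditions modulo $4$, modulo $16$, and modulo $p$ (via quadratic reciprocity, using $p\equiv 1 \bmod 4$) into admissible arithmetic progressions modulo $16p$, check by a short case analysis on $p \bmod 16$ that at least one such progression exists, and then apply a quantitative prime-counting theorem for a fixed modulus (the paper cites Siegel--Walfisz) to locate a prime in $(p^k, 2p^k)$ for large $k$. The only cosmetic difference is that the paper pins down a single explicit progression rather than a union of admissible ones.
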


\begin{proof}
We first claim that there exists a number $m\in \NN$ such that 
\[ \left(\frac{m}{p} \right)  = -1, \quad m^2 \not\equiv p \mod 16 \quad \text{and} \quad m\equiv 1 \mod 4.\]
This can be argued as follows. As the group of invertible elements in $\ZZ/p\ZZ$ is cyclic, half of its elements are squares and the other half are not, so we can fix a representative $L\in\NN$ of a non-square mod $p$. Since $p$ is invertible mod $16$, for every residue class $c \in \ZZ/16\ZZ$ there exists a number $a\in\NN$ such that
$L+ap \in c$. Our goal is to pick $c \in \ZZ/16\ZZ$ in such a way that the number $m=L+ap$ constructed in the previous sentence has the desired properties. For this, we need to choose $c=[j]$ such that $j\equiv 1\mod 4$ but $j^2 \not\equiv p \mod 16$. We can always find such a residue class:
\begin{itemize}
\item if $p \equiv 1 \mod 16$ then both $c=[5]$ and $c=[13]$ work,
\item if $p \equiv 9 \mod 16$ then we can pick $c=[1]$ or $c=[9]$, and
\item if $p\equiv 5 \text{ or }13 \mod 16$ then any of $[1]$, $[5]$, $[9]$ or $[13]$ is fine.
\end{itemize}
So we fix $c$ according to these rules, determine $a$ as above, and set $m=L+ap$. This number is congruent to $1$ mod $4$, its square is different from $p$ mod $16$, and it is not a quadratic residue mod $p$.  Note that these conditions are also satisfied for all members of the arithmetic progression $(m+ r \cdot 16 p)_{r\in \NN}$. Moreover, by the law of quadratic reciprocity, any prime number $q$ in this progression satisfies $\left(\frac{p}{q} \right)  = -1$ as well. Since $m$ and $16p$ are coprime, there are infinitely many such primes by Dirichlet's theorem. The Siegel-Walfisz theorem \cite{Walfisz} further states that the number of primes less than or equal to $x$ in this arithmetic progression is
\[ \frac{\mathrm{Li}(x)}{\varphi(16p)}  + \bigO{x \exp\left(- C (\log x )^{1/2}\right)},
\] 
where $\mathrm{Li}$ denotes the logarithmic integral, $\varphi$ the Euler totient function and $C>0$ is some constant depending on $p$ only. This implies the existence of a prime $q_k$  in the progression $(m+ r \cdot 16 p)_{r\in \NN}$ which is contained in the required interval if $k$ is large enough.
\end{proof}

The conditions on the sequence $(q_k)_k$ above also imply the following.

\begin{lem} \label{lem:sixteen}
Let $p\equiv 1 \mod 4$ be a prime number, and let $k$ and $q_k$ be as in \lemref{lem:modular} with $k$ even. Then $16 \nmid  p^{\ceil{\log_p(q_k^2)}} - q_k^2$ and $16 \nmid  2p^{\ceil{\log_p(2q_k^2)}} - 4q_k^2$.
\end{lem}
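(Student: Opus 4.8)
The plan is to verify the two non-divisibility claims by reducing everything modulo $16$ and using the hypotheses on $q_k$ coming from \lemref{lem:modular}, namely $q_k \equiv 1 \mod 4$ (so $q_k^2 \equiv 1$ or $9 \mod 16$, since the square of an odd number is $1$ or $9 \mod 16$) and $q_k^2 \not\equiv p \mod 16$. First I would record the elementary fact that for an odd integer $a$, $a^2 \equiv 1 \mod 8$, and more precisely $a^2 \equiv 1 \mod 16$ if $a \equiv \pm 1 \mod 8$ and $a^2 \equiv 9 \mod 16$ if $a\equiv \pm 3 \mod 8$; in particular the possible values of $q_k^2 \mod 16$ are exactly $\{1,9\}$, and likewise $p \equiv 1$ or $9 \mod 16$ (as $p\equiv 1\mod 4$), while $2q_k^2 \equiv 2 \text{ or } 18 \equiv 2 \mod 16$ and $4q_k^2 \equiv 4 \text{ or } 36 \equiv 4 \mod 16$.

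Next I would handle the first claim, $16 \nmid p^{\lceil \log_p(q_k^2)\rceil} - q_k^2$. Since $k$ is even and $p^k < q_k < 2p^k$, we have $p^{2k} < q_k^2 < 4 p^{2k} < p^{2k+2}$, so $\lceil \log_p(q_k^2)\rceil \in \{2k+1, 2k+2\}$; in either case the exponent $e := \lceil \log_p(q_k^2)\rceil$ is a positive integer and $p^e \equiv p \mod 16$ when $e$ is odd, and $p^e \equiv p^2 \equiv 1 \mod 16$ when $e$ is even (using $p^2\equiv 1 \mod 16$, which holds for any odd $p$). So $p^e - q_k^2 \equiv p - q_k^2$ or $1 - q_k^2 \mod 16$. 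The first case is nonzero mod $16$ precisely by the hypothesis $q_k^2 \not\equiv p \mod 16$. For the second case, $1 - q_k^2 \equiv 0$ or $-8 \equiv 8 \mod 16$ depending on whether $q_k^2 \equiv 1$ or $9$; in neither subcase is it $0 \mod 16$. Hence $16 \nmid p^e - q_k^2$.

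For the second claim, $16 \nmid 2p^{\lceil \log_p(2q_k^2)\rceil} - 4q_k^2$, I would argue similarly. Writing $e' := \lceil \log_p(2q_k^2)\rceil$, we have $4q_k^2 \equiv 4 \mod 16$, and $2p^{e'}$ is $2p$ or $2p^2 \equiv 2 \mod 16$ according to the parity of $e'$; since $p \equiv 1$ or $9 \mod 16$ we get $2p \equiv 2$ or $18 \equiv 2 \mod 16$, so in fact $2p^{e'} \equiv 2 \mod 16$ regardless of parity. Therefore $2p^{e'} - 4q_k^2 \equiv 2 - 4 \equiv -2 \equiv 14 \mod 16$, which is nonzero; actually it suffices to note $2p^{e'} - 4q_k^2 \equiv 2 \mod 4$, so it cannot be divisible by $16$. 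This disposes of the second claim.

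I do not anticipate a genuine obstacle here — the lemma is a purely mechanical consequence of tracking squares and the prime $p$ modulo $16$. The only mild care needed is the bookkeeping for the exponents $\lceil \log_p(q_k^2)\rceil$ and $\lceil \log_p(2q_k^2)\rceil$: one must confirm that the bounds $p^k < q_k < 2p^k$ together with $k$ even force these ceilings to be among $2k+1, 2k+2$, so that $p$ raised to them reduces to $p$ or $1$ mod $16$; and one must remember that $p^2 \equiv 1 \mod 16$ for every odd $p$, which is what collapses the even-exponent case. Once these observations are in place, the two divisibility statements follow by checking the handful of residue combinations listed above.
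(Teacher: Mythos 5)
Your overall strategy --- reducing the two expressions modulo $16$, and falling back to modulo $4$ for the second one --- is the same as the paper's, and your second claim is fine once you retreat to the observation that $2p^{e'}-4q_k^2\equiv 2 \bmod 4$. But your first claim rests on a false modular fact. You assert that $p\equiv 1\bmod 4$ forces $p\equiv 1$ or $9\bmod 16$, equivalently that $p^2\equiv 1\bmod 16$ for every odd $p$; this is wrong (take $p=5$ or $p=13$, for which $p^2\equiv 9\bmod 16$), and it contradicts your own opening remark that odd squares are $1$ or $9$ modulo $16$. Consequently the step ``$p^e\equiv p\bmod 16$ when $e$ is odd'' is unjustified: for $p\equiv 5\bmod 8$ and $e\equiv 3\bmod 4$ one has $p^e\equiv 9p\bmod 16$. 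The correct fact, and the one the paper uses, is that $a^4\equiv 1\bmod 16$ for every odd $a$; since $k$ is even, $2k\equiv 0\bmod 4$, so $p^{2k}=(p^{k/2})^4\equiv 1$ and hence $p^{2k+1}\equiv p\bmod 16$. This is precisely where the hypothesis that $k$ is even enters, and your proof never invokes it.

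Two further points. First, the case $\lceil\log_p(q_k^2)\rceil=2k+2$ never occurs: since $p\geq 5$ we have $q_k^2<4p^{2k}<p^{2k+1}$, so the ceiling equals $2k+1$ exactly; you should eliminate the even exponent rather than try to handle it. Second, your treatment of that (vacuous) even case is internally inconsistent: you compute $1-q_k^2\equiv 0$ or $8\bmod 16$ and then claim that neither subcase is $0\bmod 16$, although the first subcase is exactly $0$; if that case could occur, your argument would fail there. In summary, the conclusion is correct and the intended method matches the paper's, but the key congruence $p^{\lceil\log_p(q_k^2)\rceil}\equiv p\bmod 16$ must be derived from $p^4\equiv 1\bmod 16$ together with the evenness of $k$, not from the false claim $p^2\equiv 1\bmod 16$.
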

\begin{proof}
Let us analyze the two expressions above separately. We have
\[ p^{\ceil{\log_p(q_k^2)}} = p^{2k+1} \equiv p \mod 16,\]
where the first equality comes from the estimate on $q_k$ and the second from the fact that 
\[[1]^4=[5]^4=[9]^4=[13]^4=[1] \]
in $\ZZ/16\ZZ$. This, by the assumption that $q_k^2 \not \equiv p \mod 16$, means that 
\[16 \nmid  p^{\ceil{\log_p(q_k^2)}} - q_k^2. \]
For the second expression, we use the hypothesis that $p\equiv q_k \equiv 1 \mod 4$ to get
\[ 2p^{\ceil{\log_p(2q_k^2)}} - 4q_k^2  \equiv 2 \mod 4 \]
and hence
\[16 \nmid  2p^{\ceil{\log_p(2q_k^2)}} - 4q_k^2. \]
\end{proof}

We are now ready to prove a lower bound on the kissing numbers of the graphs $X^{p,q_k}$.

\begin{proof}[Proof of Theorem \ref{thm_kiss_ramanujan} (b)]
Let $p$, $k$ and $q_k$ be as in \lemref{lem:sixteen}. The bounds $p^k < q_k < 2 p^k$ yield
\[ p^{\ceil{\log_p(q_k^2)}} - q_k^2 = p^{2k+1} - q_k^2 > q_k^2/4 \] 
(where we use the fact that $p\geq 5$) and
\[ 2p^{\ceil{\log_p(2q_k^2)}} - 4q_k^2 \geq 2p^{2k+1} - 4q_k^2 > 3 q_k^2/2. \]

By combining  formula \ref{eq_loops_id},  \lemref{lem:sixteen}, and the lower bound in \lemref{lem:BatemanSiegel}, we get that for every $\epsilon>0$ there exists a constant $D_\eps>0$ such that the number of shortest closed geodesics through the identity in $X^{p,q_k}$ is at least 
\[ D_\eps\, q_k^{1-\eps} \geq D_\eps \card{X^{p,q_k}}^{(1-\eps)/3}.\]
If we multiply this by the number of vertices and divide by the girth to compensate for overcounting, we obtain
\[ \kiss(X^{p,q_k}) \geq D_\eps' \frac{\card{X^{p,q_k}}^{(4-\eps)/3}}{\log_p\left(\card{X^{p,q_k}}\right)}\]
which is bounded below by $\card{X^{p,q_k}}^{4/3 - \eps}$ if $k$ is large enough.
\end{proof}

\section{Kissing numbers of small cubic graphs} \label{sec:data}

For the purpose of this section, a \emph{cubic graph} is a connected $3$-regular graph of girth at least $3$. Using the program \texttt{geng} from the computer package \texttt{nauty} developed by Brendan McKay \cite{McKay} (available in \texttt{SageMath} \cite{sagemath}), we generated all 125,816,453 cubic graphs on at most $24$ vertices and computed their kissing number, girth, diameter, and size of automorphism group. This data is compiled in Table \ref{table} and \figref{fig:plots} (found in Appendix \ref{sec:appendix}). Note that the diameter of a graph is the maximal distance between two vertices, which is a priori different from its diameter as a metric space.

\begin{table}[htb]
\begin{tabular}{|l|l|l|l|l|l|l|}
\hline
Vertices & Kissing & Girth & $\#\Aut$ & Diameter  & Relations & Moore \\
\hline
\hline
4 & 8 & 3 & 24 & 1  & $K=G=A=D$ & yes\\
\hline
6 & 18 & 4 & 72 & 2 ($\times$2)  & $K=G=A\subset D$ & yes \\
\hline
8 & 12  & 4 ($\times$2) & 48 & 2 ($\times$2)  & $K=A \subset G$,  $G\cap D$ & no\\
\hline
10 & 24 & 5 & 120 & 2 & $K=G=A=D$  & yes \\
\hline
12 & 20 &  5 ($\times$2) & 64 & 3 ($\times$34) & $K = A$,  $G \subset D$  & no\\
\hline
14 & 56  & 6 & 336 & 3 ($\times$34) & $K=G=A \subset D$ & yes \\
\hline
16 & 48 & 6 & 384  & 3 ($\times$14) & $K=G$ & no \\
\hline
18 & 42 &  6 ($\times$5) & 384 ($\times$2) & 3  & $K \subset G$ & no \\
\hline 
20 & 40 ($\times$3) & 6 ($\times$32) & 768 ($\times$2) & 3  & $K \subset G$ & no \\
\hline
22 & 44 & 6 ($\times$385) & 3072 & 4 ($\times$185836) & $K \subset G$, $G\cap D$  & no \\ 
\hline
24 & 64 & 7 & 3072 & 4 ($\times$341797) & $K = G \subset D$ & no \\ 
\hline
\end{tabular}

\vspace{0.2cm}

\caption{Largest kissing number, girth, and order of automorphism group, and smallest diameter, among all cubic graphs on a given number of vertices. The multiplicity of values that are achieved more than once is written in parentheses. The sets of record holders are denoted by $K$, $G$, $A$, and $D$ respectively and the inclusions and nonempty intersections among them are displayed in the column `Relations'. The column `Moore'  indicates whether there is a cubic Moore graph with the given number of vertices.}
\label{table}
\end{table}

From Table \ref{table}, we observe that among all cubic graphs on $n$ vertices, there is often a unique one with largest kissing number (this fails for 20 vertices) and the graphs with largest kissing number often have the largest girth as well (this fails for $12$ vertices). This happens even if there is no Moore graph on $n$ vertices. For instance, the McGee graph of girth 7---the smallest cubic cage which is not a Moore graph---uniquely maximizes kissing number among all cubic graphs on 24 vertices (or less). Recall that a cage is a regular graph whose number of vertices is smallest possible given its degree and girth.

Figure \ref{fig:plots} plots the different invariants against each other in pairs. In order to compare graphs of various sizes in a meaningful way, we define relative versions of each invariant taking values between $0$ and $1$. These measure the extent to which certain bounds are saturated. Specifically, the \emph{relative kissing number} of a cubic graph $G$ of girth $g$ on $n$ vertices is
\begin{equation*}
\kiss(G) \cdot \frac{2\log_{2}\left(\frac{n + 2}{3}\right)+1}{n(n+2)} \quad\text{if }g\text{ is odd}
\end{equation*}
or
\begin{equation*}
\kiss(G) \cdot \frac{4\log_{2}\left(\frac{n + 2}{2}\right)}{3n(n+2)} \quad\text{if }g\text{ is even}
\end{equation*}
based on \corref{cor:main}. Its \emph{relative girth} is $(3\cdot 2^{(g-1)/2}-2)/n$ if $g$ is odd and $2(2^{g/2}-1)/n$ if $g$ is even, which measures the saturation of the Moore bound. Similarly, the \emph{relative inverse diameter} of a cubic graph of diameter $D$ is $n/(3\cdot 2^D-2)$. This is in reference to the elementary bound 
\begin{equation} \label{eq:diam}
n \leq 1 + d\sum_{j=0}^{D-1} (d-1)^j
\end{equation}
coming from the fact that a connected $d$-regular graph of diameter $D$ is contained in the ball of radius $D$ about any vertex. Note that Moore graphs of odd girth saturate this bound while the Moore graphs of even girth do not. Incidentally, there are non-Moore cubic graphs of smallest diameter on $6$ and $14$ vertices in addition to the Moore graphs of girth $4$ and $6$. Finally, the \emph{relative size of automorphism group} of a cubic graph is the order of its automorphism group divided by Wormald's upper bound of $3(n/2)2^{n/2}$ \cite{Wormald}. Note that Wormald's bound is only attained for graphs on $4$ or $6$ vertices. For cubic graphs on at least $16$ vertices, a sharp improvement was obtained by van Opstall and Veliche \cite{Veliche}. However, their inequality is more complicated (it depends on the binary expansion of $n/2 + 1$) and since we are dealing with small graphs anyway, we use the weaker bound. 

Here are some observations based on the plots produced. \corref{cor:main} states that if the relative girth or the relative kissing number of a graph is equal to $1$, then so is the other. Its proof also shows that if the kissing bound is nearly saturated, then so is the Moore bound. Surprisingly, \figref{fig:0} suggests a relationship in the other direction as well, that is, a relatively large girth seems to imply a relatively large kissing number. A similar but weaker correlation can be observed between the relative size of automorphism group and the relative kissing number in \figref{fig:1}. \figref{fig:2} shows that a graph with relatively large kissing number is forced to have relatively small diameter, but that the converse is false. From Figures \ref{fig:3}, \ref{fig:4} and \ref{fig:5}, it appears that the correlation between the other pairs of variables is somewhat weaker. One might have expected the girth and inverse diameter to display a stronger correlation, given that the proofs of the corresponding bounds are almost identical.

\section{Questions} \label{sec:questions}

We conclude with a list of questions. Define the \emph{depth} of a graph $G$ to be the largest integer $\delta$ such that every geodesic walk of length $\delta$ in $G$ is contained in a shortest closed geodesic in $G$. \lemref{lem:fellow_travel} implies that the depth of a connected graph which is not a cycle is at most $\lfloor g/2 \rfloor + 1$ where $g$ is the girth. Furthermore, the proof of \thmref{thm:main} shows that graphs of large depth have large kissing number. More precisely, any graph $G$ of girth $g$ on $n$ vertices with minimal degree $d$ and depth $\delta \geq 1$ satisfies
\begin{equation} \label{eq:depth}
\kiss(G) \geq \frac{nd(d-1)^{\delta-1}}{g}.
\end{equation}
 The proof of \thmref{thm:main} also shows that Moore graphs are precisely the connected regular graphs of depth $\lfloor g/2 \rfloor + 1$. Recall that apart from cycles, all Moore graphs have girth at most 12 and hence depth at most $7$. As another example, $s$-arc-transitive graphs have depth at least $s$ but Weiss \cite{Weiss2} showed that the largest $s$ can be is $7$. This raises the following questions.

\begin{que} \label{LargeDepth}
Do there exist $d$-regular graphs of arbitrarily large depth for any $d \geq 3$?
\end{que}

\begin{que}
Do there exist graphs of minimal degree at least $3$ and arbitrarily large depth?
\end{que}

A similar question without the regularity or the minimal degree requirement was posed by Neumaier, who asked whether all connected graphs of sufficiently large depth were cycles or uniform subdivisions of other graphs. This was answered in the negative by Homobono and Peyrat \cite{HP}. Their counterexamples have arbitrarily large depth, but still many vertices of degree $2$ (they are subdivisions of other graphs, just not uniform). 

If Question \ref{LargeDepth} has an affirmative answer, then one can ask the following stronger version.

\begin{que}
Does there exist a sequence of $d$-regular graphs of unbounded girth whose depth is a positive proportion of their girth, for any $d \geq 3$?
\end{que}

By inequality \eqref{eq:depth}, such graphs would have super-linear kissing numbers if their girth was logarithmically large. While some of the Ramanujan graphs of Lubotzky--Phillips--Sarnak have super-linear kissing numbers, it is not at all clear if their depth is proportional to their girth. All that is really needed to get large kissing number is that sufficiently many long walks can be extended to shortest closed geodesics, rather than all.

The discrepancy between \corref{cor:main} and  \thmref{thm_kiss_ramanujan_intro} also begs the question of whether one can do better.

\begin{que}
Does there exist, for any $d\geq 3$, a sequence of $d$-regular graphs with kissing numbers growing like the number of vertices to some power $a>4/3$? 
\end{que}

Another interesting question is whether super-linearity can be achieved in \emph{every} degree.

\begin{que} \label{EveryDegree}
Does there exist, for every $d\geq 3$, a sequence of $d$-regular graphs with kissing numbers growing like the number of vertices to some power $a>1$? 
\end{que}

Such graphs necessarily have logarithmically large girth by \thmref{thm:main}. A theorem of Erd\"os and Sachs \cite{ErdosSachs} says that logarithmic girth can be achieved in every degree, and there even exist explicit constructions in all degrees except $7$ \cite{Dahan}. Besides the graphs $X^{p,q}$, the following families would be a good place to start for answering Question \ref{EveryDegree}.

\begin{que}
Does a subsequence of the (cubic) sextet graphs of Biggs and Hoare \cite{BiggsHoare} have super-linear kissing numbers? 
\end{que}

\begin{que}
Does a subsequence of the $(m+1)$-regular graphs of Morgenstern \cite{Morgenstern} have super-linear kissing numbers for every prime power $m$? 
\end{que}

\begin{que}
Does a subsequence of the $(m+1)$-regular graphs of Dahan \cite{Dahan} have super-linear kissing numbers for every $m\geq 10$ which is not a prime power? 
\end{que}

The last question is raised by the numerical data of \secref{sec:data}.

\begin{que}
If a $d$-regular graph maximizes the kissing number among all $d$-regular graphs on at most the same number of vertices, then is it a cage?
\end{que}

\bibliography{biblio}

\begin{thebibliography}{BFdlV82}

\bibitem[AK15]{Azakla}
J.~Azarija and S.~Klav\v{z}ar.
\newblock Moore graphs and cycles are extremal graphs for convex cycles.
\newblock {\em J. Graph Theory}, 80(1):34--42, 2015.

\bibitem[Alo86]{Alon}
N.~Alon.
\newblock Eigenvalues and expanders.
\newblock volume~6, pages 83--96. 1986.
\newblock Theory of computing (Singer Island, Fla., 1984).

\bibitem[Bat51]{Bateman}
P.T. Bateman.
\newblock On the representations of a number as the sum of three squares.
\newblock {\em Trans. Amer. Math. Soc.}, 71:70--101, 1951.

\bibitem[BB90]{BiggsBoshier}
N.L. Biggs and A.G. Boshier.
\newblock Note on the girth of {R}amanujan graphs.
\newblock {\em J. Combin. Theory Ser. B}, 49(2):190--194, 1990.

\bibitem[BFdlV82]{BF}
B.~Bollob\'{a}s and W.~Fernandez de~la Vega.
\newblock The diameter of random regular graphs.
\newblock {\em Combinatorica}, 2(2):125--134, 1982.

\bibitem[BH83]{BiggsHoare}
N.L. Biggs and M.J. Hoare.
\newblock The sextet construction for cubic graphs.
\newblock {\em Combinatorica}, 3(2):153--165, 1983.

\bibitem[Big93]{BiggsBook}
N.~Biggs.
\newblock {\em Algebraic graph theory}.
\newblock Cambridge Mathematical Library. Cambridge University Press,
  Cambridge, second edition, 1993.

\bibitem[Big98]{BiggsCubic}
N.~Biggs.
\newblock Constructions for cubic graphs with large girth.
\newblock {\em Electron. J. Combin.}, 5:Article 1, 25, 1998.

\bibitem[CS99]{ConwaySloane}
J.H. Conway and N.J.A. Sloane.
\newblock {\em Sphere packings, lattices and groups}, volume 290 of {\em
  Grundlehren der Mathematischen Wissenschaften [Fundamental Principles of
  Mathematical Sciences]}.
\newblock Springer-Verlag, New York, third edition, 1999.
\newblock With additional contributions by E. Bannai, R.E. Borcherds, J. Leech,
  S.P. Norton, A.M. Odlyzko, R.A. Parker, L. Queen and B.B. Venkov.

\bibitem[Dah14]{Dahan}
X.~Dahan.
\newblock Regular graphs of large girth and arbitrary degree.
\newblock {\em Combinatorica}, 34(4):407--426, 2014.

\bibitem[DSV03]{DSV}
G.~Davidoff, P.~Sarnak, and A.~Valette.
\newblock {\em Elementary number theory, group theory, and {R}amanujan graphs},
  volume~55 of {\em London Mathematical Society Student Texts}.
\newblock Cambridge University Press, Cambridge, 2003.

\bibitem[ES63]{ErdosSachs}
P.~Erd\H{o}s and H.~Sachs.
\newblock Regul\"{a}re {G}raphen gegebener {T}aillenweite mit minimaler
  {K}notenzahl.
\newblock {\em Wiss. Z. Martin-Luther-Univ. Halle-Wittenberg Math.-Natur.
  Reihe}, 12:251--257, 1963.

\bibitem[FBP19]{FBP}
M.~Fortier~Bourque and B.~Petri.
\newblock Kissing numbers of closed hyperbolic manifolds.
\newblock Preprint, \href{https://arxiv.org/abs/1905.11083}{\tt
  arXiv:1905.11083}, 2019.

\bibitem[FP15]{FanPar}
F.~Fanoni and H.~Parlier.
\newblock Systoles and kissing numbers of finite area hyperbolic surfaces.
\newblock {\em Algebr. Geom. Topol.}, 15(6):3409--3433, 2015.

\bibitem[Gro85]{Grosswald}
E.~Grosswald.
\newblock {\em Representations of integers as sums of squares}.
\newblock Springer-Verlag, New York, 1985.

\bibitem[HP89]{HP}
N.~Homobono and C.~Peyrat.
\newblock Graphs such that every two edges are contained in a shortest cycle.
\newblock {\em Discrete Math.}, 76(1):37--44, 1989.

\bibitem[Imr84]{Imrich}
W.~Imrich.
\newblock Explicit construction of regular graphs without small cycles.
\newblock {\em Combinatorica}, 4(1):53--59, 1984.

\bibitem[LPS88]{LPS}
A.~Lubotzky, R.~Phillips, and P.~Sarnak.
\newblock Ramanujan graphs.
\newblock {\em Combinatorica}, 8(3):261--277, 1988.

\bibitem[Lub10]{Lubotzky}
A.~Lubotzky.
\newblock {\em Discrete groups, expanding graphs and invariant measures}.
\newblock Modern Birkh\"{a}user Classics. Birkh\"{a}user Verlag, Basel, 2010.
\newblock With an appendix by Jonathan D. Rogawski, Reprint of the 1994
  edition.

\bibitem[Mar82]{Margulis}
G.A. Margulis.
\newblock Explicit constructions of graphs without short cycles and low density
  codes.
\newblock {\em Combinatorica}, 2(1):71--78, 1982.

\bibitem[Mar88]{Margulis2}
G.A. Margulis.
\newblock Explicit group-theoretic constructions of combinatorial schemes and
  their applications in the construction of expanders and concentrators.
\newblock {\em Problemy Peredachi Informatsii}, 24(1):51--60, 1988.

\bibitem[McK81]{McKay}
B.D. McKay.
\newblock Practical graph isomorphism.
\newblock {\em Congr. Numer.}, 30:45--87, 1981.

\bibitem[Mor94]{Morgenstern}
M.~Morgenstern.
\newblock Existence and explicit constructions of {$q+1$} regular {R}amanujan
  graphs for every prime power {$q$}.
\newblock {\em J. Combin. Theory Ser. B}, 62(1):44--62, 1994.

\bibitem[Par13]{Par}
H.~Parlier.
\newblock Kissing numbers for surfaces.
\newblock {\em J. Topol.}, 6(3):777--791, 2013.

\bibitem[Sar90]{Sarnak}
P.~Sarnak.
\newblock {\em Some applications of modular forms}, volume~99 of {\em Cambridge
  Tracts in Mathematics}.
\newblock Cambridge University Press, Cambridge, 1990.

\bibitem[Sar19]{Sardari}
N.T. Sardari.
\newblock Diameter of {R}amanujan graphs and random {C}ayley graphs.
\newblock {\em Combinatorica}, 39(2):427--446, 2019.

\bibitem[Sch94]{SchmutzKiss}
P.~Schmutz.
\newblock Systoles on {R}iemann surfaces.
\newblock {\em Manuscripta Math.}, 85(3-4):429--447, 1994.

\bibitem[Sie15]{Siegel}
C.L. Siegel.
\newblock {\em Gesammelte {A}bhandlungen. {I}}.
\newblock Springer Collected Works in Mathematics. Springer, Heidelberg, 2015.
\newblock Edited by Komaravolu Chandrasekharan and Hans Maa\ss , Reprint of the
  1966 edition [ MR0197270].

\bibitem[SS97]{SchmutzSuper}
P.~Schmutz~Schaller.
\newblock Extremal {R}iemann surfaces with a large number of systoles.
\newblock In {\em Extremal {R}iemann surfaces ({S}an {F}rancisco, {CA}, 1995)},
  volume 201 of {\em Contemp. Math.}, pages 9--19. Amer. Math. Soc.,
  Providence, RI, 1997.

\bibitem[{The}19]{sagemath}
{The Sage Developers}.
\newblock {\em {S}ageMath, the {S}age {M}athematics {S}oftware {S}ystem
  ({V}ersion 8.7)}, 2019.
\newblock {\tt https://www.sagemath.org}.

\bibitem[TK92]{TeoKoh}
C.P. Teo and K.M. Koh.
\newblock The number of shortest cycles and the chromatic uniqueness of a
  graph.
\newblock {\em J. Graph Theory}, 16(1):7--15, 1992.

\bibitem[Vl{\u a}18]{Vladut}
S.~Vl{\u a}du{\c t}.
\newblock Lattices with exponentially large kissing numbers.
\newblock Preprint, \href{https://arxiv.org/abs/1802.00886}{\tt
  arXiv:1802.00886}, 2018.

\bibitem[vOV10]{Veliche}
M.A. van Opstall and R.~Veliche.
\newblock Cubic graphs with most automorphisms.
\newblock {\em J. Graph Theory}, 64(2):99--115, 2010.

\bibitem[Wal36]{Walfisz}
A.~Walfisz.
\newblock Zur additiven {Z}ahlentheorie. {II}.
\newblock {\em Math. Z.}, 40(1):592--607, 1936.

\bibitem[Wei74]{Weiss2}
R.~M. Weiss.
\newblock \"{U}ber {$s$}-regul\"{a}re {G}raphen.
\newblock {\em J. Combinatorial Theory Ser. B}, 16:229--233, 1974.

\bibitem[Wei84]{Weiss}
A.~Weiss.
\newblock Girths of bipartite sextet graphs.
\newblock {\em Combinatorica}, 4(2-3):241--245, 1984.

\bibitem[Wor79]{Wormald}
N.~Wormald.
\newblock On the number of automorphisms of a regular graph.
\newblock {\em Proc. Amer. Math. Soc.}, 76(2):345--348, 1979.

\end{thebibliography}
\bibliographystyle{alpha}

\appendix
\section{Figure} \label{sec:appendix}

\begin{figure}[H]

\begin{subfigure}{.5\textwidth}
\centering
\includegraphics[scale=.9]{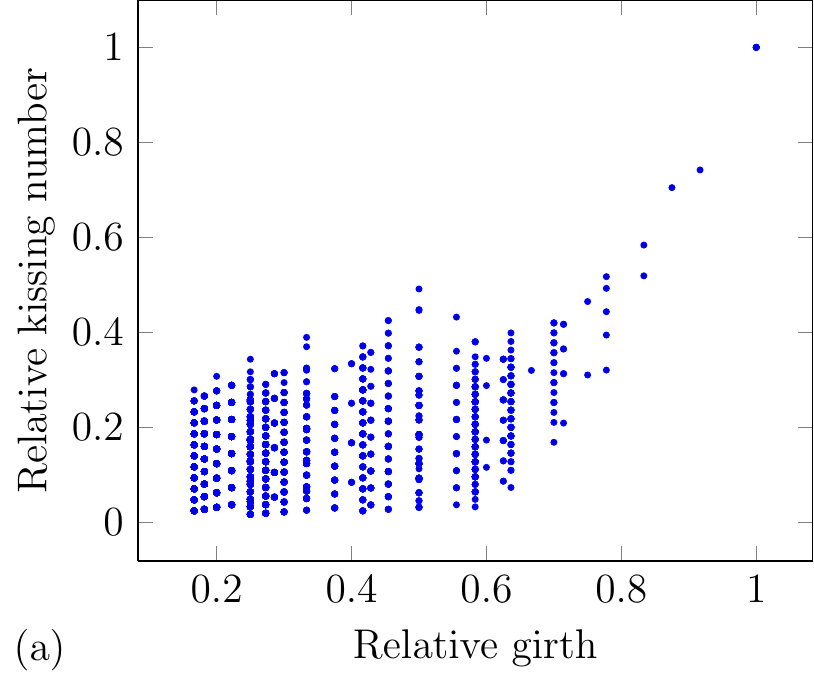}
\phantomsubcaption
\label{fig:0}
\end{subfigure}%
\begin{subfigure}{.5\textwidth}
\centering
\includegraphics[scale=.9]{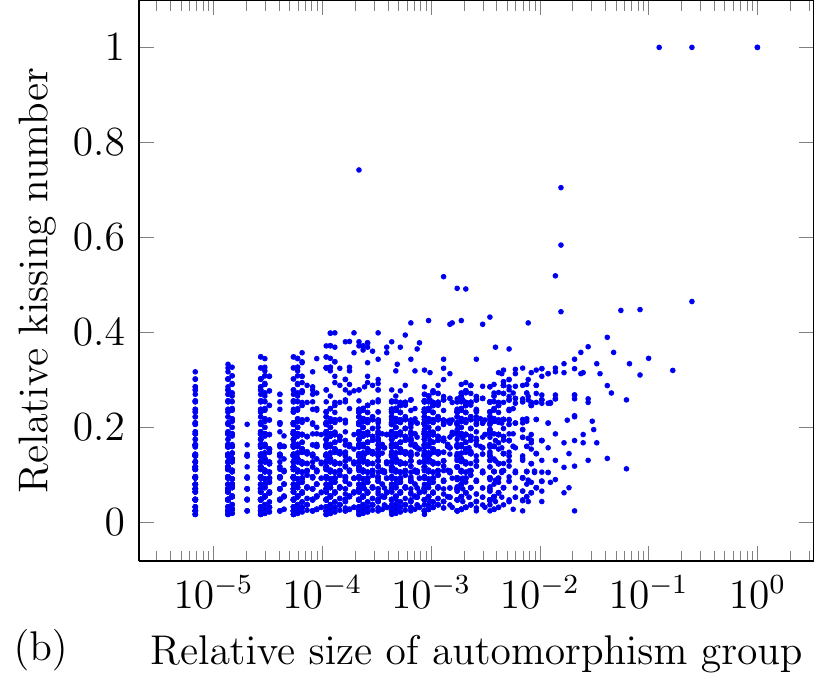}
\phantomsubcaption
\label{fig:1}
\end{subfigure}\\[3ex]
\begin{subfigure}{.5\textwidth}
\centering
\includegraphics[scale=.9]{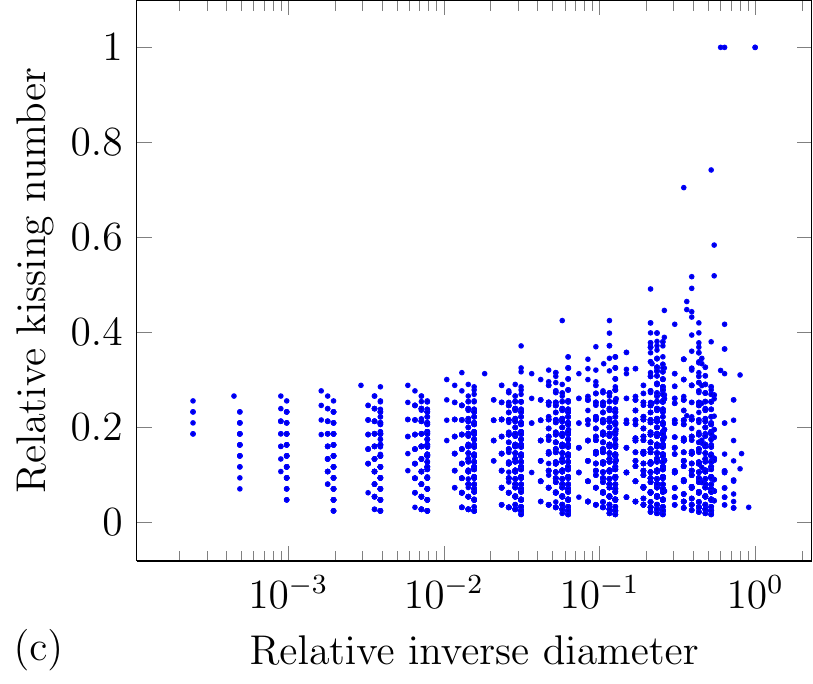}
\phantomsubcaption
\label{fig:2}
\end{subfigure}%
\begin{subfigure}{.5\textwidth}
\centering
\includegraphics[scale=.9]{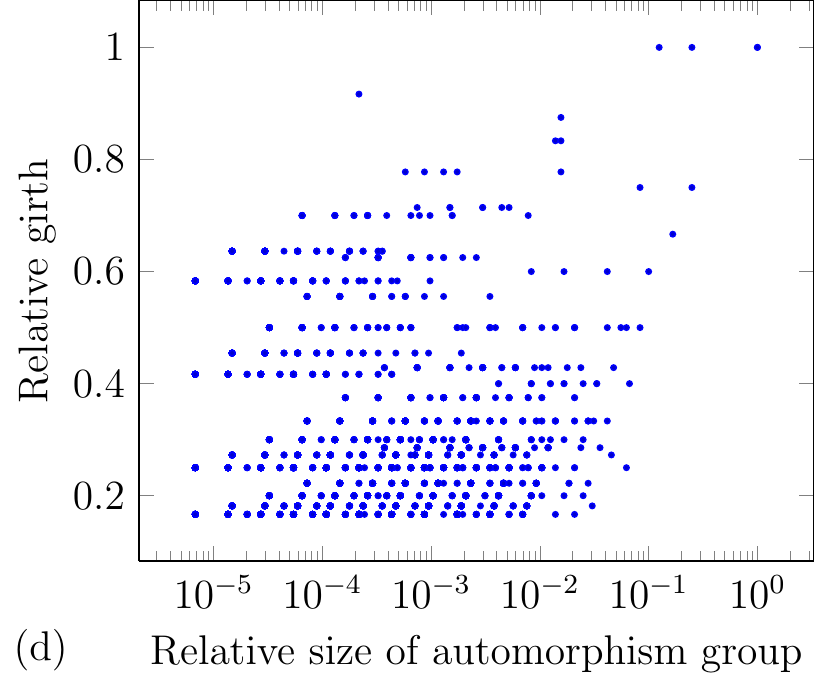}
\phantomsubcaption
\label{fig:3}
\end{subfigure}\\[3ex]
\begin{subfigure}{.5\textwidth}
\centering
\includegraphics[scale=.9]{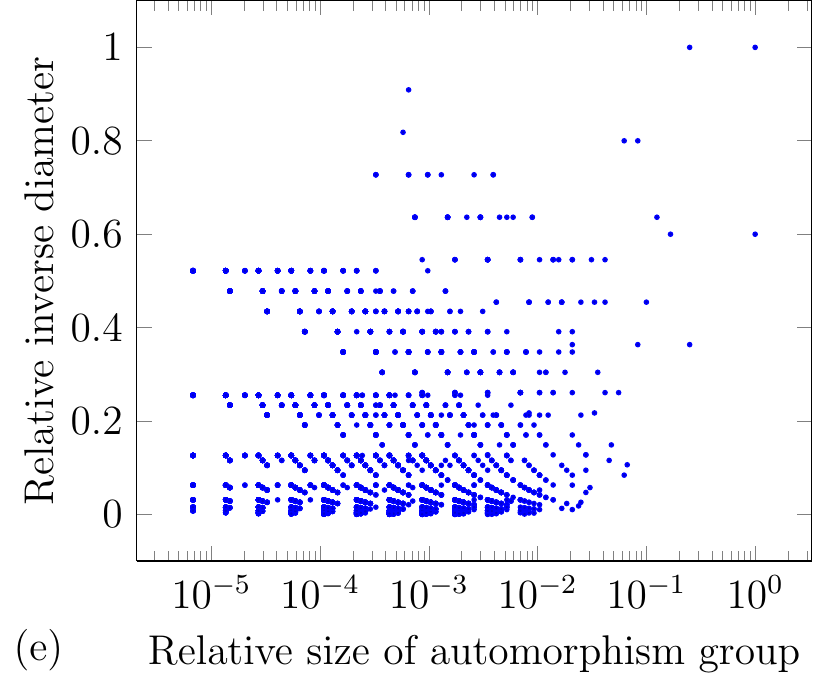}
\phantomsubcaption
\label{fig:4}
\end{subfigure}%
\begin{subfigure}{.5\textwidth}
\centering
\includegraphics[scale=.9]{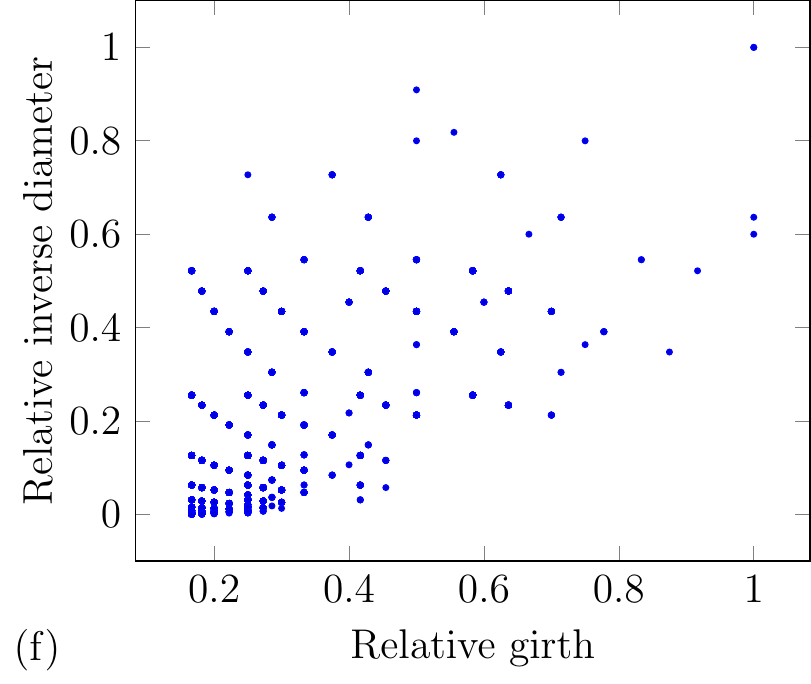}
\phantomsubcaption
\label{fig:5}
\end{subfigure}

\caption{Scatter plots comparing the relative invariants of all cubic graphs on at most 24 vertices}

\label{fig:plots}

\end{figure}

\end{document}